\newtheorem{theorem}{Theorem}[section]
\newtheorem{proposition}[theorem]{Proposition}
\newtheorem{lemma}[theorem]{Lemma}
\newtheorem{notation}[theorem]{Notation}
\newtheorem{definition}[theorem]{Definition}
\newtheorem{remark}[theorem]{Remark}
\newcounter{cpteur}
\newcommand{\PSL}{\wt{\rm PSL}}
\newcommand{\R}{\mathbb R}
\newcommand{\E}{\mathbb E}
\newcommand{\D}{\mathbb D}
\newcommand{\M}{\mathbb M}
\newcommand{\N}{\mathbb N}
\newcommand{\wt}{\widetilde}
\newcommand{\wh}{\widehat}
\newcommand{\g}{ g_{\rm euc}}
\def\rmd{\mathop{\rm d\kern -1pt}\nolimits}
\def\rme{\mathop{\rm e\kern -1pt}\nolimits}
\def\bel{ \medskip
 \centerline{$ \ast \hbox to 1.0cm{}\ast \hbox to 1.0cm{}\ast $}
}
\newcommand{\sd}{\mathbb{S}^2}
\newcommand{\hi}[1]{\mathbb{H}^#1}
\newcommand{\ov}[1]{\overline{#1}}
\let\leq=\leqslant
\let\geq=\geqslant
\begin{document}

\title[reflection principle]{classical Schwarz reflection principle for Jenkins-Serrin 
type minimal surfaces}

\author[R. Sa
Earp $\ $ and $\ $ E. Toubiana  ]{
 Ricardo Sa Earp and
Eric Toubiana}

 \address{Departamento de Matem\'atica \newline
  Pontif\'\i cia Universidade Cat\'olica do Rio de Janeiro\newline
Rio de Janeiro \newline
22451-900 RJ \newline
 Brazil }
\email{rsaearp@gmail.com}

\address{Institut de Math\'ematiques de Jussieu - Paris Rive Gauche \newline
Universit\'e Paris Diderot - Paris 7 \newline
Equipe G\'eom\'etrie et Dynamique,  UMR 7586 \newline
B\^atiment Sophie Germain \newline
Case 7012 \newline
75205 Paris Cedex 13 \newline
France}
\email{eric.toubiana@imj-prg.fr}

\thanks{
 Mathematics subject classification: 
 53A10, 53C42, 49Q05. \\
The  first and second authors were partially supported by CNPq
 of Brasil.
}

\date{\today}

\begin{abstract}
 We give a proof of the classical Schwarz reflection principle for Jenkins-Serrin type  
minimal surfaces in 
 the homogeneous  three manifolds $\E(\kappa,\tau)$ for $\kappa \leq 0$ and $\tau 
\geq 0$. 
In our previous paper we proved a reflection principle in Riemannian manifolds. 
The statements and techniques in the two papers are distinct. 
\end{abstract}

\keywords{Minimal surfaces, Jenkins-Serrin type surfaces,
Schwarz reflection principle, curvature estimates, blow-up techniques.}

\maketitle

\section{Introduction}

In this paper we focus the classical Schwarz  
reflection principle across  a geodesic 
line  in the boundary of a minimal surface in $\R^3$  and more generally in 
three dimensional homogeneous spaces $\E(\kappa,\tau)$ for $\kappa<0$ and $\tau \geq 0$.

The Schwarz reflection principle was shown in some special cases. 
One kind of examples arise for  
the solutions of the classical Plateau problem in $\R^3$ containing a segment of a 
straight line in the boundary, see Lawson 
\cite[Chapter II, Section 4, Proposition 10]{Lawson}. Another kind occur for 
vertical graphs in $\R^3$ and $\hi2\times \R$ containing an arc of a horizontal geodesic, 
see \cite[Lemma 3.6]{SE-T4}.

On the other hand, there is no proof of the reflection principle for general minimal 
surfaces in $\R^3$ containing a straight line in its boundary.

\smallskip 

 The goal of this paper is to provide a proof of the reflection principle about vertical 
geodesic lines for Jenkins-Serrin type minimal surfaces in 
$\R^3$ and other three dimensional homogeneous manifolds such as, for example, 
$\hi2 \times \R$, $\PSL_2(\R, \tau)$ and $\sd \times \R$, see Theorem \ref{T.Reflexion}. 
The proof also holds for horizontal geodesic lines.

\smallskip 

We observe that this classical  Schwarz reflection principle was used by many authors, 
including the present authors, 
in 
$\R^3$ and $\hi2 \times \R$.
 
We recall that the authors proved another reflection principle for minimal surfaces in 
general three dimensional Riemannian manifold with quite different statement and 
techniques, see \cite{SE-T9}.

\smallskip 

We are 
grateful to the referee of our paper whose remarks greatly improved this
work.

\section{A brief description of the three dimensional homogeneous 
manifolds $\E(\kappa,\tau)$}
For any  $r>0$ we denote by $\D (r)\subset \R^2$ the open disc of  
$\R^2$ with center at the origin and with radius $r$ (for the Euclidean 
metric). 

For any  $\kappa \leq 0$ and  $\tau \geq 0$ we consider the model of  
$\mathbb{E}(\kappa,\tau)$ given by $\mathbb{D}(\dfrac{2}{\sqrt{-\kappa}}) \times \R$ 
equipped with the metric  
\begin{equation}\label{Eq.PSL}
 \nu_\kappa^2(dx^2 + dy^2) + 
 \left(\tau\nu_\kappa(ydx - xdy) + dt 
 \right)^2. 
\end{equation}
where $\nu_\kappa = \dfrac{1}{1 +\kappa\frac{x^2+y^2}{4}}$.
We observe that $ \mathbb{E}(-1,\tau)= \PSL_2(\R,\tau)$. 
By abuse of notations we set $\D (\frac{1}{0})= \D (+\infty)= \R^2$. Thus 
$ \mathbb{E}(0,\tau) = {\rm Nil}_3(\tau)$. Also, $\R^3$ equipped with the Euclidean 
metric is a model of $\mathbb{E}(0,0)$.

% Note that the unit vertical field $\frac{\partial}{\partial t}$ is a Killing field.

\smallskip 

We denote by $\M (\kappa)$ the complete, connected and simply connected Riemannian 
surface with constant curvature $\kappa$. Notice that for $\kappa<0$ a model of $\M 
(\kappa)$ is given by the disc $\mathbb{D}(\dfrac{2}{\sqrt{-\kappa}}) $ equipped 
with the 
metric 
$ \nu_\kappa^2(dx^2 + dy^2)$.

We recall that 
$\mathbb{E}(\kappa,\tau)$ is a fibration over $\M (\kappa)$, and the 
projection 
$\Pi : \mathbb{E}(\kappa,\tau) \longrightarrow \M (\kappa)$ is a Riemannian submersion, 
see for example \cite{Daniel}. Moreover the unit vertical field 
$\frac{\partial}{\partial t}$ is a Killing field generating a one-parameter group of 
isometries given by the vertical translations.

\smallskip

We have seen in \cite[Example 2.2-(2)]{SE-T9} that the horizontal geodesics and the 
vertical geodesics of $\E(\kappa,\tau)$ admit a reflection. That is, for any such a 
geodesic $L$, there exists a non trivial isometry $I_L$ of  $\E(\kappa,\tau)$ 
satisfying 
\begin{itemize}
\item $I_L$ is orientation preserving,

 \item $I_L (p) =p$ for any $p\in L$,
 
\item $ I_L \circ I_L= {\rm Id}$.
\end{itemize}

Let $\Omega$ be any domain of $\M(\kappa)$ and let $u:\Omega\longrightarrow\R$ be a 
$C^2$-function. We say that the set
$\Sigma:= \{(p, u(p)),\ p\in \Omega\} \subset \mathbb{D}(\dfrac{2}{\sqrt{-\kappa}}) \times 
\R$ is a {\em vertical graph}. Note that the Killing field $\frac{\partial}{\partial t}$ 
is transverse to $\Sigma$. Thus, by the well-known criterium of stability, if 
$\Sigma$ is a minimal surface then $\Sigma$ is stable. 

\smallskip

Consider some arbitrary  local coordinates 
$(x_1,x_2,x_3)$ of $\E(\kappa,\tau)$. 
Let $u$ be a $C^2$ function defined on a domain $\Omega$  
contained in the 
$x_1,x_2$ plane of coordinates. Let $S\subset \E(\kappa,\tau)$ be the graph of $u$. 
Then $S$ is a minimal surface if $u$ satisfies an elliptic PDE (called 
{\em minimal surface equation})
\begin{equation*}
  F(x,u,u_1,u_2,u_{11}, u_{12}, u_{22})=0,
\end{equation*}
see \cite[Equation (13)]{SE-T9}. Furthermore, if $u$ has bounded gradient then the 
PDE is uniformly elliptic.

\section{Jenkins-Serrin type minimal surfaces}\label{S.J-S}

The original Jenkins-Serrin's theorem was conceived in $\R^3$, see 
\cite[Theorems 1, 2 and 3]{J-S}. It was extended in $\hi2 \times\R$ by 
B. Nelli and H. Rosenberg \cite[Theorem 3]{N-R} and in $\M^2\times \R$ by 
A.L. Pinheiro \cite[Theorem 1.1]{Pinheiro} where $\M^2$ is a complete Riemannian surface. 
Later on  it was established in $\PSL_2(\R)$ by R. Younes 
\cite[Theorem 1.1]{Younes} and in $\text{Sol}_3$ by M. H. 
Nguyen 
\cite[Section 3.6]{Nguyen}.
As a matter of fact the same  proof also works in the 
homogeneous spaces 
$\E(\kappa,\tau)$ for any $\kappa < 0$ and $\tau \geq 0$.

\smallskip 

We state briefly below the Jenkin-Serrin type theorem in the homogeneous 
spaces $\E(\kappa,\tau)$ for $\kappa<0$ and $\tau \geq 0$ (same statement holds in 
 $\R^3$ and in $\M^2\times\R$).

\smallskip

 Let $\Omega$ be a bounded convex  domain in $\M^2(\kappa)$, thus for any point 
 $p\in \Gamma := \partial \Omega$ there 
is 
a complete geodesic line $\Gamma_p\subset \M^2(\kappa)$ such that $\Omega$ remains in one open 
component of 
$\M^2(\kappa) \setminus \Gamma_p$.

We assume that the  $C^0$ Jordan curve $ \Gamma \subset \M^2(\kappa)$ is 
constituted of two families 
 of open geodesic arcs $A_1,\dots,A_a$, $B_1,\dots,B_b$ 
and a family of  open arcs 
$C_1,\dots,C_c$ with their endpoints. We assume also that no two $A_i$ and no two $B_j$ have a 
common endpoint. 

% 
% We denote by $\Omega$ the bounded convex domain in $\M^2(\kappa)$ 
% with 
% boundary $\Gamma$.

\smallskip

On each open arc $C_k$ we assign  a continuous boundary data $g_k$. 

Let $P\subset \ov \Omega$ be any polygon  whose vertices are chosen 
among the endpoints  of the open geodesic arcs 
$A_i, B_j$, we call $P$ an {\em admissible polygon}. 
We set 
\begin{equation*}
 \alpha (P) = \sum_{A_i\subset P} \lVert A_i\lVert,\ 
 \beta (P)  = \sum_{B_j\subset P} \lVert B_j\lVert,  \ %{\rm and}\ 
 \gamma(P)= \text{perimeter of }\ P.
\end{equation*}
With the above notations the Jenkins-Serrin's theorem asserts the following:

If the family $\{C_k \}$ is not empty then there exists a function \linebreak
$u: \Omega \longrightarrow ~\R$ whose graph is a minimal surface in 
$\E(\kappa,\tau)$ 
and such that 
\begin{equation*}
 u_{\vert A_i}=+\infty,\  u_{\vert B_j}=-\infty,\ 
 u_{\vert C_k}=g_k
\end{equation*}
if and only if 
\begin{equation} \label{H.J-S}
 2\alpha (P) < \gamma (P),\ \ 
  2\beta (P) < \gamma (P)
\end{equation}
for any admissible polygon $P$. In this case the function $u$ is unique.  

If the family $\{C_k \}$ is empty  such a function $u$ exists  if and only if 
$\alpha(\Gamma) = \beta (\Gamma)$ and condition (\ref{H.J-S}) holds for any  
admissible polygon $P\not= \Gamma$. In this case the function $u$ is unique up to an 
additive constant.

\smallskip 

We denote by $\Sigma \subset \E(\kappa,\tau)$ the graph of $u$ over $\Omega$ and we call 
such a surface 
a {\em Jenkins-Serrin type minimal surface}. 

\begin{remark}\label{R.boundary}
We observe that when the family $\{C_k \}$ is 
empty, the boundary of $\Sigma$ is the union of vertical geodesic line $\{q\} \times \R$ 
for any common endpoint $q$ between  geodesic arcs $A_i$ and 
$B_j$.

Suppose that the family $\{C_k \}$ is not empty and let $x_0$ 
be a common vertex  between 
$A_i$ and $C_k$, if any. If $g_k$ has a finite limit at $x_0$, 
say $\alpha$, then the 
half vertical line $\{x_0  \}\times [\alpha, +\infty[\,$  lies 
in the boundary of 
$\Sigma$. Now if $x_0$  is a common vertex  between 
$B_j$ and $C_k$ and if $g_k$ has a finite limit at $ x_0$, say 
$\beta$, then 
the 
half vertical line $\{x_0  \}\times \,]-\infty, \beta]$  lies 
in the boundary of 
$\Sigma$. At last, if $x_0$ is a common vertex  between 
$C_i$ and $C_k$ and  if $g_i$ and $g_k$ have different finite limits at 
$x_0$, say 
$\alpha < \beta$, then the vertical segment 
$\{x_0  \}\times [\alpha, \beta]$ lies 
in the boundary of $\Sigma$.
\end{remark}

\section{Main theorem}

For any vertical geodesic line $L$ of $\E(\kappa,\tau)$, we denote by $I_L$  
the reflection about the line 
$L$.

\bigskip

\begin{theorem}\label{T.Reflexion}
Using the notations of section \ref{S.J-S} and under the 
assumptions of Remark 
\ref{R.boundary}, let 
 $\gamma \subset \{x_0 \}\times \R :=   L \subset \E(\kappa,\tau)$
 be a 
vertical component of the boundary of the  open minimal vertical graph 
$\Sigma \subset \E(\kappa,\tau)$, where $\kappa<0$ and $\tau \geq 0$.

Then, we can extend minimally  $ \Sigma$ by reflection about $L$. More precisely,  
$S :=  \Sigma\, \cup \gamma \cup I_L (\Sigma)$ is a smooth minimal surface invariant by 
the reflection about $\Gamma$, containing 
${\rm int}( \gamma)$ in its interior. 

Furthermore the same statement and proof hold for $\Sigma\subset \R^3$ or 
$\Sigma\subset\sd \times \R$.
\end{theorem}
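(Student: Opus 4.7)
My plan is to establish smoothness of $S$ across $\mathrm{int}(\gamma)$ by representing $S$ locally as a graph over a vertical plane $\Pi_p$ through $L$ and then extending by odd reflection through $L$. Since $I_L$ is an isometry, $I_L(\Sigma)$ is tautologically minimal, so the content lies entirely in the smooth matching at $\gamma$.

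The first step is to show that $\Sigma$ admits a well-defined tangent vertical half-plane $\Pi_p^+$ at each $p \in \mathrm{int}(\gamma)$, with edge $L$. For this I would perform a blow-up at $p$: rescaling the ambient metric by $\lambda_n \to \infty$ gives $C^\infty_{\mathrm{loc}}$ convergence to the flat metric on $T_p\E(\kappa,\tau) \cong \R^3$. Since $\Sigma$ is stable as a vertical graph through the transverse Killing field $\partial_t$, I would combine Schoen's curvature estimate for stable minimal surfaces with the isometric doubling through $I_L$ to control $|A|$ on approach to $\mathrm{int}(\gamma)$. Subsequential limits of the rescaled $\Sigma$ are tangent cones at $p$; being scale-invariant stable minimal cones in $\R^3$ whose boundary is a vertical line, they must be half-planes through that line by the classification of minimal cones in low dimension. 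Uniqueness of the tangent cone across the blow-up sequence then follows from the Jenkins-Serrin prescription at $x_0$: the side of $L$ that $\Sigma$ occupies and the angular aperture at $\gamma$ are fixed by which of $A_i, B_j$ carries the $+\infty$ (respectively $-\infty$) data.

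The second step is local. Near $p$, I would represent $\Sigma$ as the graph of a function $v$ on a neighborhood $U \subset \Pi_p^+$ of $p$, via a horizontal Killing field (or the geodesic normal flow) transverse to $\Pi_p^+$. Minimality becomes an elliptic quasilinear PDE $F(v) = 0$ with boundary condition $v = 0$ on $U \cap L$. Invariance of the ambient metric under $I_L$ implies that $F$ is $I_L$-equivariant, so the odd extension $\tilde v$ (namely $\tilde v = v$ on $U$, $\tilde v = -v \circ I_L$ on $I_L(U)$, and $\tilde v = 0$ on $L$) is a $W^{1,\infty}$ weak solution of $F(\tilde v) = 0$ across $L$. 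Schauder bootstrapping then upgrades $\tilde v$ to smooth, and its graph is $S$ near $p$, proving smoothness across $\mathrm{int}(\gamma)$.

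The main obstacle is the first step: obtaining the curvature estimate uniformly up to $\gamma$ despite $\gamma \subset \partial \Sigma$, and proving rigidity and uniqueness of the tangent cone. The reflection $I_L$ is the decisive tool: by working on the doubled object $\Sigma \cup \gamma \cup I_L(\Sigma)$ one converts $\gamma$ into an interior locus, which allows stability estimates, blow-up compactness and minimal cone classification in $\R^3$ to be applied without having to confront boundary behavior at $\gamma$ directly. The same argument transports verbatim to $\R^3$ and $\sd \times \R$, since only stability through a transverse Killing field, an isometric reflection about $L$, and the structure of minimal cones in the flat blow-up are used.
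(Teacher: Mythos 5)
There is a genuine gap, and it is a circularity at the heart of your first step. You propose to control $|A|$ up to $\mathrm{int}(\gamma)$ by ``isometric doubling through $I_L$'', i.e.\ by treating $\Sigma\cup\gamma\cup I_L(\Sigma)$ as a minimal surface in which $\gamma$ is an interior locus, so that Schoen-type interior stability estimates and interior tangent-cone analysis apply. But the assertion that the doubled set is a minimal surface (indeed, even a $C^1$ surface) across $\gamma$ \emph{is} the theorem to be proved; before the reflection principle is established, $\Sigma\cup\gamma\cup I_L(\Sigma)$ is merely a closed set, and no interior curvature estimate, monotonicity formula, or cone classification can be invoked on it near $\gamma$. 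Without the doubling, Schoen's estimate for the stable graph $\Sigma$ only gives $|A|(x)\leq C/d(x,\partial\Sigma)$, which degenerates precisely as $x\to\gamma$, so the uniform bound up to $\gamma$ --- which you correctly identify as the main obstacle --- is not obtained. The same circularity infects your second step: representing $\Sigma$ as a graph over a vertical half-plane with $v=0$ on $L$ and performing odd reflection presupposes that $\Sigma$ is $C^1$ up to $\gamma$ with vertical tangent half-plane hinged on $L$, and the uniqueness of that tangent object is exactly what is in question. (The Jenkins--Serrin data at $x_0$ fix which side of $L$ the surface lies on, but not, a priori, that a single tangent half-plane exists at each point of $\gamma$.)

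The paper breaks the circle differently: it approximates $\Sigma$ by compact \emph{area-minimizing} discs $\Sigma_n$ bounded by truncated Jordan curves $\Gamma_n$ (Proposition \ref{P.embedded}); for area minimizers the reflection principle across $L$ is already available by a comparison/cut-and-paste argument (\cite[Proposition 3.4]{SE-T9}), so each doubled $S_n=\Sigma_n\cup I_L(\Sigma_n)$ is legitimately a minimal surface with $\mathrm{int}(\gamma_n)$ in its interior. The technical core is then Proposition \ref{P.courbure-fibre}, a uniform (in $n$) interior curvature bound on $S_n$ near points of $\mathrm{int}(\gamma)$, proved by a blow-up contradiction argument in which the rescaled limit in $\R^3$ is shown to be a plane either by stability (do Carmo--Peng, Fischer-Colbrie--Schoen, Pogorelov) or by the Gauss map omitting infinitely many directions (Fujimoto, Xavier), contradicting the normalization $|\wt K(0_3)|=1$. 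Passing to the limit in the $S_n$ then yields the smooth minimal disc $D(p)\subset S$ through each $p\in\mathrm{int}(\gamma)$. If you want to salvage your outline, you must either supply an independent boundary regularity input (e.g.\ Allard-type boundary regularity for $\Sigma$ as a stationary varifold with boundary containing $\gamma$, together with a density bound at points of $\gamma$) or route the argument through area minimizers as the paper does; as written, the decisive tool you invoke is the conclusion you are trying to reach.
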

Observe that the possible cases for $\gamma$ are the following: the whole line 
$L$,  a half line of $L$ or a closed  geodesic arc of $L$.

 Observe also that, since we are under the assumptions of Remark \ref{R.boundary}, 
if $x_0$ is an endpoint of some arc $C_i$, then $g_i$ has a  finite limit at $x_0$.

\begin{remark}\label{R.horizontal}
We use the same notations as in Theorem \ref{T.Reflexion}.
Suppose that the boundary of $\Sigma$ contains an open arc 
$\delta$ (graph over an arc $C_k$) of a horizontal geodesic line 
$\Upsilon$ of $\PSL_2(\R,\tau)$.

We denote by $I_\Upsilon$ the reflection in $\PSL_2(\R,\tau)$ about $\Upsilon$.

We can prove as in \cite[Lemma 3.6]{SE-T4} $($in $\hi2 \times \R)$ that we can 
extend $ \Sigma$ by 
reflection about $\Upsilon$:  $ \Sigma\, \cup\, \delta \cup I_\Upsilon (\Sigma)$ 
is a connected smooth minimal surface containing $ \delta$ in its interior.
The same observation holds also in Heisenberg space and $\sd \times \R$.

On the other hand, we can verify that the proof of Theorem \ref{T.Reflexion} 
also works for reflection about horizontal geodesic lines.
\end{remark}

\begin{proof}
 For the sake of clarity and simplicity of notations, we provide the proof 
 in $\PSL_2(\R,\tau)=\E(-1,\tau)$. Nevertheless, all arguments and 
constructions hold in 
$\E(\kappa,\tau)$ for any $\kappa <0$ and $\tau \geq 0$,   in 
$\R^3$, that is for $\kappa=\tau=0$ and in $\sd \times \R$, that is 
for $\kappa=1$ and $\tau=0$.

\smallskip 

 We assume that the family $C_k$ is not empty. The other situation can be handled in a 
 similar way.

\smallskip 

 Recall that, by assumption, if $x_0$ is an endpoint of some arc  
$C_i$ (if any), then $g_i$  has a  finite limit at $x_0$.

\smallskip 

We suppose  that all functions $g_k$ admit also a 
 finite limit at the endpoints of $C_k$ different of $x_0$  (if any). It is possible  
to 
carry out a proof without this assumption but 
the details are cumbersome,  as we can see in the following.

 Suppose for instance that $x_1$  ($\not= x_0$) is an endpoint of some arc $C_k$, 
that $g_k$ has 
no limit at $x_1$ and that $g_k$ is bounded near $x_1$. 
Setting $\alpha=\frac{1}{2}(\liminf_{x \to x_1} g_k(x) + \limsup_{x \to x_1} g_k(x))$, 
we can find a sequence $(p_n)$ on   $C_k$ such that 
\begin{equation*}
 p_n \to x_1 \quad \text{and} \quad g_k(p_n)=\alpha \text{ for any } n.
\end{equation*}
Then we consider the new function $g_{k,n}$ on $C_k$ setting $g_{k,n}(x)=\alpha$ on the segment 
$[x_1,p_n]$ of $C_k$ and $g_{k,n}=g_k$ outside this segment. Now the continuous  function $g_{k,n}$ 
has a limit at $x_1$. Observe that for any $x\in C_k$ we have $g_{k,n}(x)=g_k(x)$ for any $n$ large 
enough.

 If $g_k$ is not bounded near $x_1$, we first truncate, for any 
$n>0$, the  
function $g_k$ above by $n$ and below by -$n$. We obtain a new continuous and bounded function 
$h_{k,n}$ on $C_k$. Then we proceed as above.

\smallskip

 For any integer $n$ we consider the Jordan curve $\Gamma_n$ obtained by the 
union of the geodesic arcs $A_i$ at height $n$,  
the geodesic arcs  
$B_j$ at height 
$-n$,  the graphs of functions $g_k$  over the open arcs $C_k$ (or $g_{k,n}$ if $g_k$ 
has no finite limit at some endpoint of $C_k$), 
 and  the 
vertical segments 
necessary to form a Jordan curve. Thus $\Gamma$ is the  projection of 
$\Gamma_n$ on $\hi2$.

Let  $\Sigma_n\subset \PSL_2(\R,\tau)$ be the embedded area minimizing disc with boundary 
$\Gamma_n$ given by Proposition \ref{P.embedded} in the Appendix. We have 
\begin{itemize}
 \item $\Sigma_n\subset \ov \Omega \times \R$,
 
 \item $\mathring{\Sigma}_n:=\Sigma_n\setminus \Gamma_n=\Sigma_n \cap 
(\Omega\times \R)$ is a vertical graph over $\Omega$.
\end{itemize}

\smallskip 

We set $\gamma_n := \Sigma_n \cap L$, where  $L:=\{x_0\} \times \R$, thus 
$\gamma_n \subset \gamma$ for any $n$. 
Due to the fact that $\Sigma_n$ is area minimizing we can apply the reflection 
principle 
about the vertical line $L$, 
 this is proven in detail in  \cite[Proposition 3.4]{SE-T9}. Thus, 
$S_n:=  \Sigma_n \cup I_L ( \Sigma_n)$ is an embedded minimal surface containing 
${\rm int}(\gamma_n)$ in its interior. By construction $S_n$ is invariant under the 
reflection $I_L$ and is orientable.

\smallskip

Let  $u_n : \Omega \longrightarrow \R$ be the function whose the graph is 
$\mathring{\Sigma}_n$. Thus $u_n$ extends continuously by  $n$ 
on the edges ${\rm int} (A_i)$, by $-n$ on the edges ${\rm int} (B_j)$ and by 
$g_k$  (or $g_{k,n}$) over the open arcs $C_k$. Using the lemmas derived in 
\cite{Younes}, following the 
original proof of \cite[Theorem 2]{J-S}, it can be proved that, 
 up to considering a subsequence, the sequence of functions 
$(u_n)$ converges  to a function $u:\Omega\longrightarrow \R$ in the $C^2$-topology, 
uniformly over any compact subset of $\Omega$.

\smallskip

Let $d_n$ be the intrinsic distance on $S_n$. For any $p\in S_n$ and any $r>0$ we denote 
by $B_n(p,r)\subset S_n$ the open
geodesic disc of $S_n$ centered at $p$ with radius $r$. By construction, for 
any $p\in {\rm int}(\gamma)$ there exist $n_p\in \N$ and a real 
number $c_p>0$ such that for any 
integer  
$n\geq n_p$ we have $p\in {\rm int}(\gamma_n)\subset {\rm int} (S_n)$ and 
$d_n (p, \partial  S_n) > 2 c_p$.

\smallskip 

We assert that  the Gaussian curvature $K_n$ of the surfaces $S_n$ is uniformly bounded 
in the neighborhood of each point of ${\rm int}(\gamma)$, 
independently of $n$.  

\begin{proposition}\label{P.courbure-fibre}
For any $p\in {\rm int}(\gamma)$ there exist  $R_p, K_p >0$, and 
there exists 
 $n_p \in \N $ satisfying $p \in  {\rm int} (\gamma_{n_p}) \subset S_{n_p}$ and 
$d_{n_p}(p,\partial S_{n_p}) > 2R_p$, 
such that for any integer $n\geq n_p$ 
we have $p\in {\rm int} (\gamma_n )\subset S_n$ and
\begin{equation*}
 \lvert K_n (x) \lvert \leq K_p,
\end{equation*}
 for any 
$x\in B_n (p,R_p)$.
\end{proposition}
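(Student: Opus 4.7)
The plan is to argue by contradiction via a blow-up (rescaling) argument, which is consistent with the ``blow-up techniques'' advertised in the paper's keywords. Suppose the conclusion fails at some $p \in {\rm int}(\gamma)$. Then, after passing to a subsequence, there exist $n_k \to \infty$ and points $x_k \in S_{n_k}$ with $x_k \to p$ in the ambient metric and $|K_{n_k}(x_k)| \to +\infty$, while $d_{n_k}(x_k, \partial S_{n_k})$ stays bounded below by a fixed positive constant. A standard point-picking argument of Hildebrandt/White type allows one to reselect the $x_k$ so that $|K_{n_k}(y)| \leq 4\,|K_{n_k}(x_k)|$ for every $y$ in the intrinsic geodesic ball $B_{n_k}(x_k, r_k) \subset S_{n_k}$, with $\lambda_k r_k \to +\infty$, where $\lambda_k := \sqrt{|K_{n_k}(x_k)|}$.

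Next, I would rescale the ambient metric of $\PSL_2(\R,\tau)$ by $\lambda_k$ and translate so that $x_k$ becomes the origin. The pointed ambient spaces converge smoothly on every compact set to $(\R^3, g_{\mathrm{euc}}, 0)$, and the vertical geodesic $L$ rescales to a line $\tilde L$ through $0$, parallel to the limit of $\partial_t$. The rescaled surfaces $\tilde S_k$ are embedded minimal surfaces in the rescaled ambient, with Gaussian curvature at most $4$ in absolute value on intrinsic balls whose radii go to $+\infty$ and equal to $1$ at the origin. Since each half $\Sigma_{n_k}$ is a vertical graph it is stable, with Jacobi function $\langle \partial_t, N \rangle$ of constant sign, and this stability survives the rescaling. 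By standard compactness for embedded minimal surfaces with uniformly bounded second fundamental form, I would extract a smooth subsequential limit $\tilde S_\infty \subset \R^3$: a complete embedded orientable minimal surface, with $|K_{\tilde S_\infty}(0)| = 1$, invariant under the line reflection $I_{\tilde L}$ about $\tilde L$.

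The contradiction is then obtained by showing $\tilde S_\infty$ must be a plane. Each half of $\tilde S_\infty$ is a minimal graph in $\R^3$ over a half-plane bounded by $\tilde L$ (being a limit of vertical graphs) and is stable; reflecting across $\tilde L$ via $I_{\tilde L}$ yields a complete orientable stable minimal surface in $\R^3$, which by the Fischer-Colbrie--Schoen / do Carmo--Peng classification is a plane. Hence $K_{\tilde S_\infty} \equiv 0$, contradicting $|K_{\tilde S_\infty}(0)| = 1$.

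The main obstacle will be ensuring smooth convergence of the rescaled surfaces $\tilde S_k$ up to and across $\tilde L$, not merely away from it. The stability-based curvature estimates apply naturally on each half in the interior (off $L$), but smoothness across $\tilde L$ must be obtained by combining them with the Schwarz reflection principle already available on the area-minimizing surfaces $S_n$ (Proposition \ref{P.embedded}) and with the fact that $\tilde L$ is the fixed set of the limiting isometric involution $I_{\tilde L}$. A secondary technical point is the verification of smooth convergence of the rescaled ambient metrics (\ref{Eq.PSL}) to the Euclidean metric, which is routine but requires some bookkeeping in the parameters $\kappa/\lambda_k^2$ and $\tau/\lambda_k$.
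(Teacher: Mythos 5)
Your overall strategy (contradiction, point-picking, rescaling by $\lambda_k=\sqrt{|K_{n_k}(x_k)|}$, convergence of the rescaled ambient metrics to the Euclidean one, extraction of a complete limit surface with $|K|=1$ at the origin) is exactly the paper's, but the endgame has a genuine gap, in two places. First, after the point-picking step the new base points are the maxima of $f_k(x)=\sqrt{|K_{n_k}(x)|}\,d_{n_k}(x,\partial B_{n_k}(p,c))$ and need not lie on, or even near, $L$ at the scale $1/\lambda_k$; so you cannot assert that $L$ rescales to a line $\wt L$ through the origin, nor that the limit surface is invariant under a reflection fixing a line through $0$. The rescaled lines $\wt L_k$ may escape to infinity, and the paper is forced to split into two cases accordingly. (In the escaping case your argument could be repaired: the whole rescaled disc is then a limit of pieces of the graphs $\Sigma_{n_k}$, hence the limit is stable and do Carmo--Peng / Fischer-Colbrie--Schoen / Pogorelov applies.)

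Second, and more seriously, in the case where $\wt L_k$ converges to a line $\wt L$, your concluding step --- ``reflecting across $\wt L$ yields a complete orientable \emph{stable} minimal surface'' --- is unjustified and false in general: stability is not preserved under gluing two stable pieces along a common boundary curve (the catenoid, of index one, is the union of two stable halves). The Jacobi function $\langle\partial_t,N\rangle$ has constant sign on each half but changes sign across $\wt L$, so it does not certify stability of the union. This is precisely why the paper does not argue via stability in this case; instead it proves (Lemmas \ref{L.plane}, \ref{L.Intersection}, \ref{L.Gauss}) that the limit surface meets the limit vertical plane $\wt\Pi$ exactly in $\wt L$, that each half is a genuine vertical graph (this itself requires the openness of the Gauss map, since a limit of graphs can a priori acquire vertical tangent planes), and then deduces that for every vertical plane $P\supset\wt L$ the two normals $\pm\nu$ to $P$ cannot both be attained; hence the Gauss map omits a whole equator of directions and the surface is a plane by Fujimoto or Xavier, contradicting $|\wt K(0_3)|=1$. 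You need either this Gauss-map argument or some other replacement for the incorrect stability claim; as written the proof does not close.
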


 We postpone the proof of Proposition \ref{P.courbure-fibre} until 
Section \ref{S.Prop}.

% \noindent {\bf End of the proof of the theorem}

\vskip1mm

Assuming Proposition \ref{P.courbure-fibre} we will prove that for any 
$p \in {\rm int}(\gamma)$  there exists an embedded minimal disc $D(p)$, 
containing $p$ in its interior,  such that 
$ D(p) \subset\Sigma \cup \gamma \cup I_L(\Sigma)$. 
This will prove that  $\Sigma \cup \gamma \cup I_L(\Sigma)$ is a  minimal surface, that 
is smooth along ${\rm int} (\gamma)$.

\smallskip 

Let  $p\in  {\rm int}(\gamma)$, we deduce from Proposition 
\ref{P.courbure-fibre} that 
there exist real numbers $R_p, K_p >0$ and $n_p\in \N$ such that for any integer $n \geq 
n_p$ and for any point $x \in B_n(p,R_p)$ we have $\lvert K_n(x)\lvert \leq K_p$. 
 By construction, $B_n(p,R_p)$ is an embedded minimal disc.

\vskip1mm

 Therefore it can be proved as in \cite{RST}, using 
\cite[Proposition 2.3, Lemma 2.4]{RST} and the discussion that follows, 
 that up to taking a subsequence,
the geodesic discs $B_n(p,R_p)$ 
converge 
for the $C^2$-topology to 
a minimal disc $D(p)\subset \R^3$ containing $p$ in its interior. We recall that 
each 
 geodesic disc $B_n(p,R_p)$ is embedded, contains an open subarc $\gamma(p)$ of 
$\gamma$ (which does 
not depend on $n$) passing through $p$, and 
$B_n(p,R_p)$ is invariant under the reflection $I_L$. Thereby the minimal disc 
$D(p)$  
also  is embedded,  contains the subarc $\gamma(p)$ and inherits the same symmetry. 

\smallskip

We set $S:=\Sigma\cup \gamma \cup I_L(\Sigma)$.

By construction the surfaces ${\rm int}( S_n)\setminus L$ converge to 
$\Sigma \cup I_L (\Sigma)$. We observe that 
$B_n(p,R_p)\setminus L \subset S_n\setminus L$ and then 
$D(p) \setminus L \subset \Sigma \cup I_L(\Sigma)$. Then we have 
$D(p)\subset  S$. We conclude henceforth that $S$ is a smooth   
minimal surface invariant under the reflection $I_L$, this accomplishes the proof of the 
theorem.
\end{proof}

\medskip

\begin{remark}
 Theorem \ref{T.Reflexion} holds also in case where $x_0$ is and endpoint of some arc 
 $C_i$ and $g_i$ has an {\em infinite limit} at $x_0$.
 
  Indeed, assume that $\lim_{x \to x_0} g_i(x)= +\infty$. We denote by $g_{i,n}$ the new function on 
$C_i$ obtained by  truncating the function $g_i$ above by $n$. Then, in the proof of  Theorem 
\ref{T.Reflexion}, we consider the embedded area minimizing disc $\Sigma_n$ constructed with the 
function $g_{i,n}$ on $C_i$ $($instead of $g_i)$. Then we can proceed the proof in the same way. 
\end{remark}

\begin{remark}
We don't know if the Jenkins-Serrin type theorem was established in the 
Heisenberg spaces $Nil_3(\tau)=\E(0,\tau)$ for $\tau > 0$. Assuming the 
Jenkins-Serrin type theorem, the proof of Theorem \ref{T.Reflexion} works  
 to establish the same reflection principle  for vertical 
 geodesic lines  in $Nil_3(\tau)$.
\end{remark}

\section{Proof of Proposition \ref{P.courbure-fibre}}  \label{S.Prop}

% \noindent {\em Proof of the Proposition.} 
% \begin{proof}
We argue by absurd. 

Suppose by contradiction that there exists 
$p\in {\rm int}(\gamma)$ such that for any $k\in \N^*$ 
there exist an integer 
$n_k > k$ and  $x_k \in B_{n_k}(p, \frac{1}{k})$ such that 
 $ \lvert K_{n_k}(x_k) \lvert > k^2$.

There exist $c>0$ and $k_0 \in \N^*$ such that for any integer $k\geq k_0$ we have 
$p\in {\rm int} (\gamma_{n_k})$ and $d_{n_k}(p,\partial S_{n_k}) 
>2c$. Thus 
$\ov B_{n_k}(p,c) \subset {\rm int} (S_{n_k})$.

Moreover there exists an integer $k_1 > k_0$ such that for any integer 
$k \geq k_1$ we have  $d_{n_k}(x_k,\partial B_{n_k}(p,c)) > c/2$.

 \smallskip

From now on,  we are going to use  classical blow-up techniques.  

 \smallskip
 
 Define the continuous function 
 $f_k: \ov B_{n_k}(p,c) \longrightarrow [0, + \infty[\, $ for any  $k\geq k_1$, setting: 
  $f_k(x)=\sqrt{\lvert K_{n_k}(x)\lvert} \,d_{n_k}(x,\partial B_{n_k}(p,c))$. 
 
\smallskip 

Clearly $f_k\equiv 0$ on $\partial B_{n_k}(p,c)$ and 
\begin{equation*}
 f_k(x_k)=\sqrt{\lvert K_{n_k} (x_k)\lvert}\, d_{n_k}(x_k,\partial B_{n_k}(p,c))
 \geq k \frac{c}{2}.
\end{equation*}
We fix a point $p_k\in  B_{n_k}(p,c)$ where the function $f_k$ attains its maximum 
value, hence 
\begin{equation}\label{minoration-f-fibre}
  f_k(p_k)\geq k\frac{c}{2}.
\end{equation}
We deduce therefore 
\begin{equation}\label{minoration-fibre}
 \lambda_k:=\sqrt{\lvert K_{n_k}(p_k)\lvert} \geq \frac{k c}{2d_{n_k}(p_k,\partial B_{n_k}(p,c))} 
 \geq \frac{kc}{2c}=\frac{k}{2} .
\end{equation}
\begin{notation}
 We set $\rho_k = d_{n_k}(p_k,\partial B_{n_k}(p,c))$ and we denote by 
$D_k \subset B_{n_k}(p,c) \subset S_{n_k}$ the open geodesic disc with center 
$p_k$ and radius $\rho_k /2$. Notice that $D_k$ is embedded.
\end{notation}
 For further purpose we emphasize that $D_k$ is 
an orientable minimal surface 
of $\PSL_2(\R,\tau)$.

\smallskip

Let us consider the model of $\PSL_2(\R,\tau)=\mathbb{E}(-1,\tau)$ given by 
(\ref{Eq.PSL}) for $\kappa=-1$, that is the product set $\D(2)\times\R$ equipped with 
the metric 
\begin{equation}\label{metric}
 ds^2:= \mu^2(dx^2 + dy^2) + 
 \big(\tau \mu (ydx - xdy) +dt  \big)^2
\end{equation}
where $\mu=\mu(x,y)= \dfrac{1}{1-\frac{x^2+y^2}{4}}$.

For any integer  $k\geq k_1$ we set 
$\mu_k= \mu_k(u,v)=\dfrac{1}{1-\frac{u^2+v^2}{4\lambda_k^2}}$.  
We consider, as in the Nguyen's thesis \cite[Section 2.2.3]{Nguyen}, the product set 
$\D(2\lambda_k) \times \R$ 
 equipped with the metric
\begin{equation}\label{metric-k}
 ds_k^2 :=  \mu_k^2(du^2 + dv^2) + 
 \left( \frac{\tau}{\lambda_k}\, \mu_k (vdu - udv) + dw \right)^2.
\end{equation}
Thus $(\D(2\lambda_k) \times \R,\, ds_k^2)$ is a model of  
$\E (\frac{-1}{\lambda_k^2}, \frac{\tau}{\lambda_k})$. 

\smallskip 

\begin{remark}
Since  $\PSL_2(\R,\tau)$ is a homogeneous space, for any integer $k\geq k_1$, up to considering an isometry 
of $\PSL_2(\R,\tau)$ which sends  
$p_k$ to the origin  $0_3:=(0,0,0)$, we can assume that $p_k=0_3$. 
See for example \cite[Chapter 5]{Penafiel} or \cite[Proposition 1.1.7]{Nguyen}.
\end{remark}

\smallskip

Let us consider the homothety 
\begin{align*}
 H_k : \D (2) \times \R & \longrightarrow \D(2\lambda_k) \times \R \\ 
 (x,y,t) &\longmapsto (u,v,w)= \lambda_k\, (x,y,t).
\end{align*}
We have  $H_k^*(ds_k^2)= \lambda_k^2\, ds^2$, see (\ref{metric}) and (\ref{metric-k}). 
Then, it follows that  
$\wt D_k := H_k (D_k)$ is an embedded minimal surface of 
$(\D(2\lambda_k) \times \R,\, ds_k^2)$. 

By construction, $\wt D_k$
is a geodesic disc with center the origine $0_3$ of 
$\D(2\lambda_k) \times \R$ : 
$0_3 \in \wt D_k \subset \D(2\lambda_k) \times \R$. Moreover the radius of 
$\wt D_k$ is $\wt \rho_k =\lambda_k \cdot$(radius of $D_k$), that is 
$\wt \rho_k=\lambda_k \, \rho_k/2 $.

\smallskip 

Using the estimate (\ref{minoration-f-fibre}) we get 
\begin{equation}\label{Eq.radius}
 \wt \rho_k=\lambda_k \, \rho_k/2 =\sqrt{\lvert K_{n_k}(p_k)\lvert}\,\, 
 d_{n_k}(p_k,\partial B_{n_k}(p,c))/2 = \frac{f_k(p_k)}{2} \geq \frac{kc}{4},
\end{equation}
thus  $ \wt \rho_k \to  \infty$ if $k \to \infty$.

\smallskip

Let $\g = du^2 + dv^2 + dw^2$ be the Euclidean metric of $\R^3$. We observe that 
$(\D(2\lambda_k) \times \R,\, ds_k^2)$ converges to $(\R^2\times\R, \, \g)$ for the 
$C^2$-topology, uniformly on any compact subset of $\R^3$.

\smallskip

 We denote by $\wt K_{n_k}$ the Gaussian curvature of $\wt D_k$. For any 
 $x\in D_k\subset \D(2) \times \R $, setting $X=H_k(x) \in \wt D_k \subset 
\D(2\lambda_k) \times \R$, we get $\wt K_{n_k} (X) = \frac{K_{n_k}(x)}{\lambda_k^2}$. 
Hence for any $X\in \wt D_k$ we obtain 
\begin{align}
 \sqrt{\lvert\wt K_{n_k} (X)\lvert} =\frac{\sqrt{\lvert K_{n_k}(x)\lvert}}{\lambda_k}
 &\leq  \frac{f_k(p_k)} {\lambda_k  \,d_{n_k}(x,\partial B_{n_k}(p,c)) } \notag \\
 &=   \frac{d_{n_k}(p_k,\partial B_{n_k}(p,c))}{d_{n_k}(x,\partial B_{n_k}(p,c))} 
< 2, \label{estime-fibre}
\end{align}
since $d_{n_k}(x,\partial B_{n_k}(p,c)) >\dfrac{\rho_k}{2}$. 

Furthermore, for any integer $k\geq k_1$ we have 
\begin{equation}\label{origine-fibre}
 \sqrt{\lvert\wt K_{n_k} (0_3)\lvert}=\frac{\sqrt{\lvert K_{n_k} 
(p_k)\lvert}}{\lambda_k}=1.
\end{equation}

We summarize  some facts derived before: 
\begin{lemma}
 \begin{itemize}
 \item each  $\wt D_k$ is an embedded and orientable minimal surface of 
$(\D(2\lambda_k) \times \R,\, ds_k^2)= 
\E(-\frac{1}{\lambda_k^2},\frac{\tau}{\lambda_k})$,

\item there is a uniform estimate of Gaussian curvature, see $(\ref{estime-fibre})$,

\item the radius $\wt \rho_k$ of the geodesic disc $\wt D_k$ go to $+\infty$ if
$k \to \infty$, see $(\ref{Eq.radius})$, 

\item the metrics $ds_k^2$ converge to $\g$ for the $C^2$-topology, uniformly on any 
compact subset of $\R^3$, see $(\ref{metric-k})$.
\end{itemize}
\end{lemma}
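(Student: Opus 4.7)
The plan is to verify the four bullet points in turn, each being a direct consequence of the definition of the homothety $H_k$, the formula (\ref{metric-k}) for $ds_k^2$, and the scaling behavior of the parameters $(\kappa,\tau)$. For the first bullet, I would begin by computing $H_k^*(ds_k^2)$ directly from (\ref{metric}) and (\ref{metric-k}), using the identities $\mu_k(\lambda_k x,\lambda_k y)=\mu(x,y)$, $H_k^*(vdu-udv)=\lambda_k^2(ydx-xdy)$, and $H_k^*(dw)=\lambda_k\,dt$, arriving at $H_k^*(ds_k^2)=\lambda_k^2\,ds^2$. Since $\lambda_k^2 ds^2$ is a constant rescaling of $ds^2$, minimality is preserved, so $\wt D_k = H_k(D_k)$ is minimal in $(\D(2\lambda_k)\times\R,\,ds_k^2)$; embeddedness and orientability transfer through the diffeomorphism $H_k$ from $D_k$, which inherits both from the $I_L$-invariant surface $S_{n_k}$ constructed earlier. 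The identification $(\D(2\lambda_k)\times\R,\,ds_k^2) = \E(-\frac{1}{\lambda_k^2},\frac{\tau}{\lambda_k})$ is then read off by comparing (\ref{metric-k}) with the general model (\ref{Eq.PSL}): the disc radius $2\lambda_k$ matches $2/\sqrt{-\kappa_k}$, forcing $\kappa_k=-1/\lambda_k^2$, and the cross-term coefficient $\tau/\lambda_k$ is exactly the bundle parameter appearing in (\ref{Eq.PSL}).

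For the second bullet, the same identity $H_k^*(ds_k^2)=\lambda_k^2\,ds^2$ yields the curvature scaling $\wt K_{n_k}(X) = K_{n_k}(x)/\lambda_k^2$ whenever $X=H_k(x)$. Since $p_k$ realizes the maximum of $f_k$ on $\ov B_{n_k}(p,c)$, for any $x\in D_k \subset B_{n_k}(p,c)$ one has $\sqrt{|K_{n_k}(x)|}\,d_{n_k}(x,\partial B_{n_k}(p,c)) \leq f_k(p_k) = \lambda_k\rho_k$, and since $x\in D_k$ forces $d_{n_k}(x,\partial B_{n_k}(p,c)) > \rho_k/2$, dividing produces the bound $<2$ announced in (\ref{estime-fibre}). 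The third bullet is then immediate: the intrinsic radius of the geodesic disc $\wt D_k$ is $\lambda_k$ times that of $D_k$, so $\wt\rho_k = \lambda_k\rho_k/2 = f_k(p_k)/2 \geq kc/4 \to \infty$ by (\ref{minoration-f-fibre}).

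For the fourth bullet, I would fix a compact set $K\subset\R^3$ and observe that $K\subset\D(2\lambda_k)\times\R$ for $k$ large enough; on $K$ the factor $\mu_k(u,v) = 1/(1-(u^2+v^2)/(4\lambda_k^2))$ is a smooth function of $1/\lambda_k^2$ whose value and derivatives of order $\leq 2$ converge uniformly to those of the constant function $1$, while $\tau/\lambda_k\to 0$, so the cross term in (\ref{metric-k}) tends to zero in $C^2$-norm and the whole metric reduces in the limit to $du^2+dv^2+dw^2=\g$. I expect no serious obstacle; the only step that needs care is the coefficient matching in the first bullet, namely confirming that the rescaled ambient space is precisely $\E(-1/\lambda_k^2,\tau/\lambda_k)$ rather than some other parameter pair — a purely algebraic verification, but the place where an arithmetic slip would propagate to all four statements.
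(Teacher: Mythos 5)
Your proposal is correct and follows essentially the same route as the paper: the lemma is explicitly presented there as a summary of facts derived immediately before it, namely the pullback identity $H_k^*(ds_k^2)=\lambda_k^2\,ds^2$, the curvature rescaling combined with the maximality of $f_k$ at $p_k$ and the bound $d_{n_k}(x,\partial B_{n_k}(p,c))>\rho_k/2$ on $D_k$, the radius computation $\wt\rho_k=f_k(p_k)/2\geq kc/4$, and the convergence $\mu_k\to 1$, $\tau/\lambda_k\to 0$ on compact sets. All of your intermediate identities and inequalities check out against the paper's equations (\ref{estime-fibre}), (\ref{Eq.radius}) and (\ref{minoration-f-fibre}).
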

 Therefore it can be proved as in \cite{RST} (using 
\cite[Proposition 2.3, Lemma 2.4]{RST} and the discussion that follows), 
 that up to considering a subsequence,
 the $\wt D_k$ converge for the 
$C^2$-topology to a complete, connected and 
orientable minimal surface $\wt S$ of $\R^3$.

\begin{remark}\label{R.disc} From the construction described in 
\cite{RST},  the surface $\wt S$ has the following properties.

There exist $r, r_0>0$ such that for any $q\in \wt S$, a piece $\wt G (q)$ of $\wt S$,  
containing the geodesic disc with center  $q$ and radius $r_0$, is a graph over 
the open 
disc $D(q,r)$ of $T_q \wt S$ with center $q$ and 
radius $r$ $($for the Euclidean metric of $\R^3)$. Furthermore:
\begin{itemize}
 \item for $k$ large enough, a piece 
$\wt G_k (q)$ of $\wt D_k\subset \D(2\lambda_k) \times \R$ is also a graph over $D(q,r)$ and the 
surfaces $\wt G_k(q)$ 
converge for the $C^2$-topology to $\wt G(q)$,

\item for any $y\in \wt G (q)$ there exists $k_y\in \N$ such that for any  
$k\geq k_y$ we can choose the piece $\wt G_k (y)$ of $\wt D_k$ such that 
$\wt G_k (q) \cup \wt G_k(y)$ is connected.
\end{itemize}
\end{remark}

\smallskip 

By construction we have $0_3\in \wt S$ and, denoting by $ \wt K$ the Gaussian curvature 
of $\wt S$ in $(\R^3, \g)$, we deduce from (\ref{origine-fibre})  
\begin{equation}\label{origine-S-fibre}
 \lvert \wt K(0_3)\lvert =1.
\end{equation}
For any integer  $k\geq k_1$ we set  $\wt L_k:= H_k (L)$. Thus, 
$\wt L_k$ is a vertical straight line of $\R^3$.
\begin{definition}
Let  $\delta_k$ be the distance in $\D(2\lambda_k) \times \R$ induced by the metric  
$ds_k^2$. 

We say that the sequence of vertical lines $(\wt L_k)$ in $\R^3$ 
{\em  disappears to infinity} if $\delta_k (0_3, \, \wt L_k) \to +\infty$ when 
$k \to +\infty$
\end{definition}
There are two possibilities: the sequence $(\wt L_k)$  disappears or not to infinity. 
We are going to show that either case cannot occur, we will find therefore a 
contradiction.

\smallskip

% \noindent {\bf Assume first that the sequence $(\wt L_k)$ vanishes  at infinity.}

\noindent {\em First case:} $(\wt L_k)$  disappears to infinity.

Observe that, by construction, the geodesic discs $B_{n_k}(p,c)$ are invariant 
under the 
reflection $I_L$ and $f_k(q)=f_k( I_L(q))$ for any $q\in B_{n_k}(p,c)$.  Since  
$p_k=0_3$  by assumption, we can assume that 
$0_3\in \Sigma_{n_k} \subset S_{n_k}$ for any $k\geq k_1$.

Let $q \in \wt S$, and consider a minimizing geodesic arc $\delta\subset \wt S$ joining 
$0_3$ to $q$. It follows from Remark \ref{R.disc} that there exist a finite 
number of 
points $q_1=0_3,\dots, q_n=q$  belonging to $\delta$, and there exists $k_q \in \N$ 
such that:
\begin{itemize}
 \item for any integer $k \geq k_q$ the subset $\cup_j \wt G_k (q_j)\subset \wt D_k$ is 
 connected and converges for the $C^2$-topology to the subset 
 $\cup_j \wt G (q_j)\subset \wt S$,
 
\item  for any integer $k \geq k_q$ we have 
\big($\cup_j \wt G_k (q_j)\big)\cap \wt L_k=\emptyset$.
\end{itemize}
Thus for any integer $k \geq k_q$ we obtain that 
$H_k^{-1}(\cup_j \wt G_k (q_j)) \cap L=\emptyset$, that is 
$H_k ^{-1}(\cup_j \wt G_k (q_j))\subset D_k\cap \Sigma_{n_k}$.

Setting $\wh D_k := H_k (D_k \cap \Sigma_{n_k})$ we deduce that 
the sequence $(\wh D_k)$ converges to $\wt S$ for the $C^2$-topology too. 
 Observe that $\wh D_k \cap \wt L_k \subset \partial \wh D_k $. Therefore 
 any minimal surface  $\wh D_k \setminus \wt L_k $ is a Killing graph and 
thus  $\wh D_k $ is a stable minimal surface of 
$\E (\frac{-1}{\lambda_k^2}, \frac{\tau}{\lambda_k})$. 

Therefore it can be proved as in the discussion following Lemma 2.4 in 
\cite{RST} that $\wt S$ is a connected, complete, orientable and stable minimal 
surface of $\R^3$. 
Thanks to results of do Carmo and Peng 
\cite{[DoC-Pe]},
Fischer-Colbrie and Schoen \cite{[FC-Sc]} and Pogorelov \cite{[Pog]}, 
$\wt S$ is a plane. But this gives a contradiction with the 
 curvature relation 
(\ref{origine-S-fibre}).

\medskip

\noindent {\em Second case:} $(\wt L_k)$ does not  disappear to infinity.

We will prove that the Gauss map of $\wt S$ omits infinitely many points, hence 
$\wt S$ would be a plane  
(see \cite[Corollary 1.3]{Fujimoto} or \cite{Xavier}), 
contradicting the curvature relation (\ref{origine-S-fibre}).

\smallskip 

Let $\alpha \in (0,\pi]$ be the interior angle of $\Gamma$ at vertex $x_0$, 
 ($\alpha$ exists since $\Gamma$ is the boundary of a convex domain). 
Observe that 
the case where $\alpha=\pi$ is under consideration.

\smallskip 

Since $\Omega$ is convex, there exists a geodesic line 
 $C_{x_0} \subset \hi2$ at $x_0$ such that $ C_{x_0}\cap \Omega=\emptyset$.
 Let $\Pi$ be the product $ C_{x_0} \times \R$ in 
 $(\D (2)  \times \R, ds^2)=\E(-1,\tau)$. When $\tau=0$ notice 
that $\Pi$, is a vertical 
totally geodesic plane in $\hi2 \times \R$. We recall that there are no totally 
geodesic surfaces in $\E(-1,\tau)$ if $\tau \not=0$, see 
\cite[Theorem 1]{Souam-Toubiana}.

Under our assumption, up to 
considering a subsequence, we can assume that the sequence $(\wt L_k)$ converges to a 
vertical 
straight line $\wt L \subset \R^3$ and that  $ \big(H_k (\Pi)\big)$ converges 
to a vertical plane $\wt \Pi \subset \R^3$ 
containing $\wt L$. Let us denote by $\wt \Pi^+$ and $\wt \Pi^-$ 
the two open halfspaces of $\R^3$ 
bounded by $\wt \Pi$.

\smallskip 

 \setcounter{cpteur}{1}

\begin{lemma}\label{L.plane} 
We have $(\wt S \cap \wt \Pi)\setminus \wt L=\emptyset$.
\end{lemma}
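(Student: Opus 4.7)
The strategy is a proof by contradiction: assuming there exists $q\in (\wt S\cap \wt\Pi)\setminus \wt L$, one combines the positional constraint that $\Omega$ lies on one side of $C_{x_0}$ with the maximum principle for minimal surfaces.

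First, I would establish the finite-level analogue $D_k\cap \Pi \subset L$ for all large $k$. By construction $D_k \subset B_{n_k}(p,c)\subset {\rm int}(S_{n_k})$, and ${\rm int}(S_{n_k})$ decomposes as $(\Sigma_{n_k} \setminus \Gamma_{n_k}) \cup {\rm int}(\gamma_{n_k}) \cup (I_L(\Sigma_{n_k}) \setminus I_L(\Gamma_{n_k}))$. Since $\Sigma_{n_k}$ is a vertical graph over $\Omega$ and $\Omega$ lies in one open component of $\mathbb{H}^2\setminus C_{x_0}$, the first piece is disjoint from $\Pi = C_{x_0}\times \R$. The reflection $I_L$ acts on the base as the rotation by $\pi$ about $x_0$, preserving $C_{x_0}$ and swapping its two open sides, so $I_L(\Omega)$ lies in the other open component and the third piece is likewise disjoint from $\Pi$; only ${\rm int}(\gamma_{n_k})\subset L \subset \Pi$ contributes. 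Applying $\Phi_k$ (sending $p_k$ to $0_3$) and the homothety $H_k$ yields $\wt D_k \cap \wt \Pi_k \subset \wt L_k$, where $\wt \Pi_k := H_k(\Phi_k(\Pi))$.

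Next, I would pass to the limit. Under the second-case hypothesis $\wt L_k \to \wt L$; the excerpt has already asserted $\wt \Pi_k \to \wt \Pi$; and by Remark \ref{R.disc}, a graph piece $\wt G_k(q) \subset \wt D_k$ converges in $C^2$ to a graph piece $\wt G(q)\subset \wt S$ around $q$. Fix a neighborhood $U$ of $q$ with $\overline{U} \cap \wt L = \emptyset$; for $k$ large $\overline{U} \cap \wt L_k =\emptyset$, hence $\wt G_k(q)\cap \wt \Pi_k \cap U = \emptyset$. I would then split according to the position of $T_q\wt S$ relative to $T_q\wt\Pi$. If $\wt S$ is transverse to $\wt \Pi$ at $q$, then locally $\wt S \cap \wt \Pi$ is a smooth curve through $q$; $C^1$-stability of transversality under the convergences $\wt G_k(q)\to \wt G(q)$ and $\wt \Pi_k \to \wt \Pi$ then forces $\wt G_k(q) \cap \wt \Pi_k \cap U \ne \emptyset$ for $k$ large, a contradiction.

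If instead $\wt S$ is tangent to $\wt \Pi$ at $q$, the connected graph $\wt G_k(q)$, being disjoint from $\wt\Pi_k$ in $U$, lies on one closed side of $\wt\Pi_k$; along a subsequence all lie on the same side. Passing to the $C^2$ limit, $\wt G(q)$ lies on one closed side of the plane $\wt\Pi$ and is tangent to it at the interior point $q$. The interior maximum principle for two-sided minimal surfaces in $\R^3$ then forces $\wt G(q)\subset \wt \Pi$ near $q$; by the real-analyticity of minimal surfaces and unique continuation along the connected surface $\wt S$, this propagates to $\wt S = \wt \Pi$. But this contradicts $\lvert\wt K(0_3)\rvert=1$ given by $(\ref{origine-S-fibre})$. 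The main obstacle is the tangent subcase, which simultaneously invokes the maximum principle and analytic continuation, whereas the transverse subcase is a routine convergence argument.
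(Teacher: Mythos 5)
Your proof is correct, and its engine is the same as the paper's: a point of $\wt S\cap \wt \Pi$ off $\wt L$ where the intersection is transverse would, by $C^2$-convergence of the $\wt D_k$, force ${\rm int}(D_k)\cap(\Pi\setminus L)\neq\emptyset$ for large $k$, contradicting the fact that ${\rm int}(S_{n_k})$ meets $\Pi$ only along $L$. You diverge in two places. First, you actually prove that finite-level disjointness, by decomposing ${\rm int}(S_{n_k})$ into the graph over $\Omega$, its reflection, and ${\rm int}(\gamma_{n_k})$, and using that $I_L$ projects to the geodesic symmetry about $x_0$, which preserves $C_{x_0}$ and swaps its two sides; the paper asserts this ``by construction'', so your paragraph is a welcome supplement rather than a deviation. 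Second, the tangential sub-case: the paper disposes of it in one line via the local structure theorem for two tangent minimal surfaces (Colding--Minicozzi, Serrin), under which the intersection near a tangency is a union of at least four arcs, so one may as well take $q$ to be a transverse intersection point; you instead note that $\wt G_k(q)$, being connected and disjoint from the separating surface $\wt\Pi_k$, lies on one closed side, pass to the limit, and apply the interior maximum principle plus analytic continuation to force $\wt S$ planar, contradicting $\lvert \wt K(0_3)\rvert=1$ from (\ref{origine-S-fibre}). Both routes are valid; the paper's is shorter and purely local at $q$, while yours replaces the structure theorem by the more elementary tangency maximum principle at the cost of importing the curvature normalization into the lemma. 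A minor point in your favor: by measuring transversality against $\wt\Pi_k=H_k(\Phi_k(\Pi))$ rather than against the limit plane $\wt\Pi$, you make the pull-back to ${\rm int}(D_k)\cap(\Pi\setminus L)\neq\emptyset$ immediate, whereas the paper's phrasing tacitly uses that $H_k(\Pi)$ is $C^1$-close to $\wt\Pi$ near $q$.
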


\begin{proof}
 Otherwise assume there exists a point 
$q\in \wt S \cap \wt \Pi$ such that $q\not\in \wt L$.  From the structure of the 
intersection of two minimal surfaces tangent at a point, see \cite[Theorem 7.3]{C-M} or 
\cite[Lemma, p. 380]{Serrin}, we may suppose that $\wt \Pi$ is 
transverse to $\wt S$ at $q$. Thus there is an open piece $\wt F(q)$ of $\wt S$ 
containing $q$ which is transverse to the plane $\wt \Pi$. Hence, for any integer $k$ 
large enough, a piece $\wt F_k(q)$   of $\wt D_k \subset \D(2\lambda_k) \times \R$ 
is so close to $\wt F(q)$ that it is 
transverse to $\wt \Pi$ too. Consequently we would have 
${\rm int} (\wt D_k )\cap \big(H_k (\Pi)\setminus \wt L_k\big) \not= 
\emptyset$, that is ${\rm int} (D_k)  \cap (\Pi \setminus L)\not= \emptyset$. But by 
construction we have ${\rm int}(S_{n_k}) \cap (\Pi \setminus L)=\emptyset$, which leads to 
a contradiction since $D_k \subset S_{n_k}$.
\end{proof}
\medskip 

\begin{lemma}\label{L.Intersection} 
 We have $\wt S \cap \wt \Pi =\wt L$.
\end{lemma}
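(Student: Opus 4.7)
By Lemma~\ref{L.plane} we already have $(\wt S \cap \wt\Pi)\setminus \wt L = \emptyset$, hence $\wt S \cap \wt\Pi \subseteq \wt L$. Since $L \subset \Pi$ implies $\wt L_k \subset \wt\Pi_k := H_k(\Pi)$ for every $k$, passing to the limit gives $\wt L \subset \wt\Pi$. Therefore the lemma reduces to proving the reverse inclusion $\wt L \subseteq \wt S$.

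The hypothesis that $(\wt L_k)$ does not disappear to infinity is equivalent to $d(p_k, L) = \delta_k(0_3,\wt L_k)/\lambda_k = O(1/\lambda_k) \to 0$, so in the normalisation identifying $p_k$ with $0_3$, the line $L$ itself approaches the origin at rate $1/\lambda_k$. Fix $q \in \wt L$ and let $q_k$ be the point of $\wt L_k$ with the same vertical height as $q$ in $\R^3$; then $q_k \to q$, and $H_k^{-1}(q_k) \in L$ has third coordinate tending to $0$. The plan is to verify that $q_k \in \wt D_k$ for $k$ large and conclude, via the $C^2$ convergence $\wt D_k \to \wt S$ provided by Remark~\ref{R.disc}, that $q \in \wt S$.

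The verification $q_k \in \wt D_k$ rests on two claims. First, $H_k^{-1}(q_k) \in \gamma_{n_k}$: because $p \in \mathrm{int}(\gamma)$ there is an open subarc $J \subset L$ around $p$ lying in $\gamma$, and the Jenkins--Serrin construction ensures $\gamma_{n_k} \supset J$ for $k$ large; once $c$ is chosen small enough that the $c$-neighbourhood of $p$ in $L$ sits inside $J$, the points $H_k^{-1}(q_k)$ (whose heights in the normalised coordinates tend to $0$) eventually fall in $J \subset \gamma_{n_k}$. Second, $H_k^{-1}(q_k) \in D_k$: the Euclidean distance from $q_k$ to $0_3$ in $\R^3$ is bounded uniformly in $k$, and the curvature estimate~(\ref{estime-fibre}) on $\wt D_k$ together with the divergence $\wt\rho_k \to \infty$ supplies, through the graph pieces of Remark~\ref{R.disc}, a uniform comparison between the intrinsic $ds_k^2$-distance from $0_3$ to $q_k$ in $\wt D_k$ and the ambient $\R^3$ distance, forcing this intrinsic distance to be eventually less than $\wt\rho_k$. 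The second claim is the main technical obstacle, since the disc $D_k$ has small intrinsic radius $\rho_k \leq c$ in $\PSL_2(\R,\tau)$ while its Gaussian curvature may blow up along the sequence; the whole point of the blow-up normalisation is precisely to convert this into the bounded-geometry picture where the intrinsic--extrinsic comparison becomes uniform, after which the two claims yield $q_k \in \wt D_k \cap \wt L_k$ with $q_k \to q$, hence $q \in \wt S$.
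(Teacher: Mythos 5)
Your reduction -- via Lemma~\ref{L.plane} the statement amounts to showing $\wt L \subseteq \wt S$ -- is correct, but the argument you give for $\wt L \subseteq \wt S$ has a genuine gap exactly at what you flag as ``the main technical obstacle'', the second claim. The hypothesis that $(\wt L_k)$ does not disappear to infinity only bounds the \emph{ambient} $ds_k^2$-distance $\delta_k(0_3,\wt L_k)$; and even granting your first claim, so that $H_k^{-1}(q_k)\in\gamma_{n_k}\subset S_{n_k}$, this does not put $q_k$ in $\wt D_k$: the disc $\wt D_k$ is the \emph{intrinsic} geodesic ball of radius $\wt\rho_k$ about $0_3$ in $H_k(S_{n_k})$, and a uniform curvature bound such as (\ref{estime-fibre}) yields only local graph structure (graphs of a fixed size $r$ over tangent planes, as in Remark~\ref{R.disc}); it yields no comparison between intrinsic and extrinsic distance. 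Surfaces of uniformly bounded curvature routinely contain pairs of points that are extrinsically arbitrarily close and intrinsically arbitrarily far apart (two nearly parallel sheets), so the assertion that the blow-up ``converts'' the extrinsic bound into an intrinsic one is false. A priori $\wt S$ could be a single sheet passing arbitrarily close to $\wt L$ without touching it; nothing in your argument excludes this, so you have not even established that $\wt S\cap\wt\Pi\neq\emptyset$.

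The paper's proof avoids any intrinsic--extrinsic comparison. First, if $\wt S\cap\wt\Pi=\emptyset$, then $\wt S$ lies in one open halfspace $\wt\Pi^{\pm}$ and is therefore the limit of the graphs $\wt D_k\cap H_k(\Pi^{\pm})$, hence stable, hence a plane by do Carmo--Peng, Fischer-Colbrie--Schoen and Pogorelov, contradicting (\ref{origine-S-fibre}); so the intersection is nonempty. Next, at any intersection point $q$ (which lies on $\wt L$ by Lemma~\ref{L.plane}), tangency of $\wt\Pi$ and $\wt S$ would produce at least four intersection arcs through $q$ and hence points of $\wt S\cap(\wt\Pi\setminus\wt L)$, again contradicting Lemma~\ref{L.plane}; so the intersection is transverse, the local intersection curve is an arc of $\wt L$, and Proposition~\ref{P.Line} (analytic continuation of a straight segment inside a complete minimal surface) upgrades that segment to all of $\wt L$. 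To repair your proof you would need to replace the second claim with an argument of this type; as written, the step is not merely unproved but rests on a comparison principle that does not hold.
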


\begin{proof}
Assume first that $\wt S \cap \wt \Pi=\emptyset$. Hence $\wt S$ stay in an open  
halfspace, say 
$\wt \Pi^+$, of $\R^3$ bounded by $\wt \Pi$. Observe that the halfspace $\wt 
\Pi^+$ is 
the limit of  open subspaces $H_k(\Pi^+)$ of 
$\D(2\lambda_k)\times \R$ where 
$\Pi^+$ is one of the two open halfspaces of $\D (2) \times \R$ bounded by 
$\Pi$. 
Consequently 
$\wt S$ is the limit of the graphs $\wt D_k \cap H_k (\Pi^+)$. Therefore, as 
in the first case, we obtain that $\wt S$  is stable and thus is a plane, giving a 
contradiction with (\ref{origine-S-fibre}). We obtain therefore 
$\wt S \cap \wt \Pi\not=\emptyset$.

\smallskip 

Let $q\in \wt S \cap \wt \Pi$. We deduce from Lemma \ref{L.plane} that $q\in \wt L$.
If $\wt \Pi$ were the tangent plane of $\wt S$ at $q$, 
then the intersection $\wt S \cap \wt \Pi$ would consist in a even number $\geq 4$ of 
arcs issued from $q$,  see \cite[Theorem 7.3]{C-M} or 
\cite[Lemma, p. 380]{Serrin}. Then we infer that 
$\wt S \cap (\wt \Pi \setminus \wt L)\not=\emptyset$ which is not possible due to 
 Lemma \ref{L.plane}. 
 
 Thus $\wt \Pi$ is transverse to $\wt S$ at $q$. Since $\wt S\cap \wt \Pi \subset \wt L$, 
 we deduce from  Lemma \ref{L.plane} that $\wt S\cap \wt \Pi$ contains an open arc of 
$\wt L$ containing 
$q$. This proves that  $\wt S\cap \wt \Pi$ contains a segment of 
$\wt L$. It is well known that if a complete minimal surface of $\R^3$ contains a segment 
of a straight line then it contains the whole straight line, see 
Proposition \ref{P.Line} in the Appendix. 
We conclude that $\wt S\cap \wt \Pi=\wt L$ as desired.
\end{proof}

\begin{remark}
To prove that $\wt S \cap \wt \Pi\not=\emptyset$
 we can alternatively argue as follows. Assume that $\wt S \cap \wt \Pi=\emptyset$. 
By construction $\wt S$ is 
a complete and connected minimal surface  in $\R^3$ without self-intersection. 
Furthermore we deduce from 
the estimates 
$(\ref{estime-fibre})$ that $\wt S$ has bounded curvature. It follows from 
\cite[Remark]{Harold} that $\wt S$ is properly embedded. Since $\wt S$ lies in a 
halfspace, we deduce from the halfspace theorem \cite[Theorem 1]{HM} that  
$\wt S$ is a 
plane, which gives a 
contradiction with  $(\ref{origine-S-fibre})$. Thus 
$\wt S \cap \wt \Pi\not=\emptyset$.
\end{remark}

\smallskip

We deduce from Lemma \ref{L.Intersection} that $\wt S\setminus \wt \Pi=\wt 
S\setminus \wt L$ has 
two connected components, say $\wt S^- \subset \wt \Pi^-$ and 
$\wt S^+ \subset \wt \Pi^+$. In the same way we denote by $ \Pi^+$ and $\Pi^-$ 
the 
two open halfspaces of $\D (2) \times \R$ bounded by $\Pi$. We have that 
$\wt \Pi^+$  (resp. $\wt \Pi^-$ ) is the limit of $H_k (\Pi^+)$ 
(resp. $H_k (\Pi^-)$).

We  set $D_k^{\pm} := D_k \cap \Pi^{\pm}$ and 
 $\wt D_k^{\pm}:= H_k (D_k^{\pm})=\wt D_k\cap H_k(\Pi^\pm) $. We observe 
that $ \wt D_k^+$ 
 and $ \wt D_k^-$ are vertical graphs and that $\wt S^+$ (resp. $\wt S^-$)
is the limit of $(\wt D_k^+)$ (resp. $(\wt D_k^-$))  for the $C^2$-topology.

For any integer $k\geq k_1$ we denote by $\wt N^k$ a smooth unit normal vector field on 
$\wt D_k$ with respect to the metric $ds_k^2$, see (\ref{metric-k}). Let $\wt N_3^k$ be 
the {\em vertical component} of $\wt N^k$, this means that 
$\wt N^k -\wt N_3^k\frac{\partial}{\partial t} $ and $\frac{\partial}{\partial t}$ are 
orthogonal  vector fields along $\wt D_k$. 

Since  $(\wt D_k)$ converges to 
$\wt S$ for the $C^2$-topology, we can define a unit 
normal field $\wt N$ on $\wt S$ as the limit of the fields $\wt N^k$.

\smallskip 

\begin{lemma}\label{L.Gauss}
 We have $\wt N_3\not=0$ on $\wt S^+ \cup \wt S^-$. Furthermore $\wt S^+$ and 
$\wt S^-$ are vertical graphs.
\end{lemma}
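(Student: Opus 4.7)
The plan is to treat the two assertions in order. Each $\wt D_k^\pm$ is a vertical graph in $\E(-1/\lambda_k^2,\tau/\lambda_k)$, so I may orient its unit normal so that the vertical component $\wt N^k_3$ is positive. Passing to the $C^2$-limit provided by Remark \ref{R.disc} produces a unit normal $\wt N$ on each of the connected surfaces $\wt S^+$ and $\wt S^-$ with $\wt N_3 \geq 0$. To upgrade this to a strict inequality I invoke the strong maximum principle for the Jacobi equation: since $\partial/\partial t$ is a Killing field of $\R^3$, its normal component $\wt N_3 = \langle \wt N, \partial/\partial t\rangle$ satisfies $\Delta_{\wt S}\wt N_3 + |A|^2 \wt N_3 = 0$ along the minimal surface $\wt S$. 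If $\wt N_3$ vanished at some point of $\wt S^+$, the strong maximum principle would force $\wt N_3 \equiv 0$ on the connected open set $\wt S^+$, and by real-analyticity of $\wt S$ (elliptic regularity applied to the minimal surface equation) on all of $\wt S$. Then $\partial/\partial t$ would be tangent to $\wt S$ everywhere, making $\wt S$ a vertical cylinder over a planar curve $\gamma_0 \subset \R^2$; its minimality forces $\gamma_0$ to be a straight line and $\wt S$ to be a vertical plane, contradicting the curvature relation $|\wt K(0_3)|=1$ of (\ref{origine-S-fibre}). The same argument handles $\wt S^-$.

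For the graph property I would argue by contradiction. Assume two distinct points $p_1 \neq p_2$ of $\wt S^+$ share a horizontal projection $q_0 \in \R^2$. Since $\wt N_3 > 0$ on $\wt S^+$, each $p_i$ admits an open neighborhood in $\wt S^+$ realized as the vertical graph of a smooth function $v_i$ on an open horizontal disc $U_i$ around $q_0$, with $v_1(q_0) \neq v_2(q_0)$. By the local $C^2$-convergence from Remark \ref{R.disc} near each $p_i$, for all large $k$ the surface $\wt D_k^+$ contains vertical graphs of smooth functions $v^k_i$ over (slightly smaller) horizontal discs around $q_0$ with $v^k_i \to v_i$. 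But $\wt D_k^+$ is itself a global vertical graph of a single function $u^+_k$, so $v^k_1(q_0) = u^+_k(q_0) = v^k_2(q_0)$, which in the limit yields $v_1(q_0) = v_2(q_0)$, a contradiction. The identical argument applies to $\wt S^-$.

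The main obstacle is propagating $\wt N_3 \equiv 0$ from $\wt S^+$ (where the strong maximum principle directly applies, since $\wt N_3$ has constant sign there as a $C^2$-limit of graph normals) across $\wt L$ to the entire connected real-analytic surface $\wt S$, so that the cylinder-must-be-a-plane dichotomy actually yields a contradiction with the normalization $|\wt K(0_3)|=1$.
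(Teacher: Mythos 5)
Your proof is correct, but the first half takes a genuinely different route from the paper. For the non-vanishing of $\wt N_3$ on $\wt S^{\pm}$, the paper argues pointwise: since $\lvert \wt K(0_3)\lvert=1$ the limit surface $\wt S$ is not a plane, hence its Gauss map is open, so a zero of $\wt N_3$ interior to the region where $\wt N_3\geq 0$ would force nearby points with $\wt N_3<0$ --- a two-line contradiction. You instead use the Jacobi equation $\Delta_{\wt S}\wt N_3+|A|^2\wt N_3=0$ satisfied by the normal component of the Killing field $\frac{\partial}{\partial t}$, apply the strong maximum principle to the nonnegative solution $\wt N_3$ on the connected open set $\wt S^+$, propagate $\wt N_3\equiv 0$ to all of $\wt S$ by unique continuation, and rule out the resulting vertical cylinder/plane via (\ref{origine-S-fibre}). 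Both arguments are sound and both ultimately lean on the same normalization $\lvert \wt K(0_3)\lvert=1$; the paper's is shorter and purely local, while yours is the standard stability-theoretic argument and has the advantage of generalizing to ambient spaces where the Gauss map is unavailable but $\frac{\partial}{\partial t}$ is still Killing. One small point common to both treatments: the inequality $\wt N_3\geq 0$ on $\wt S^+$ requires fixing a coherent sign of $\wt N_3^k$ on the pieces of $\wt D_k^+$ converging to a given compact part of $\wt S^+$ (extract a subsequence so the sign is constant); your maximum-principle step only needs this locally near the putative zero, so nothing breaks. Your proof of the graph property is essentially identical to the paper's: two points of $\wt S^+$ over the same vertical line yield, by $C^2$-convergence, two locally graphical pieces of the single global vertical graph $\wt D_k^+$ over overlapping discs, which is absurd.
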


\begin{proof}
 Indeed,  we know that $ \wt D_k^+$ is a vertical graph. So we can 
assume  that $\wt N_3^k >0$ along $ \wt D_k^+$ for any $k \geq k_1$. By considering the 
limit of the fields $\wt N^k$ we get that $\wt N_3 \geq 0$ on $\wt S^+$. 

Let $q\in \wt S^+$ be a point such that $\wt N_3 (q)=0$, if any. Recall that the Gauss map 
of 
a non planar 
minimal surface of $\R^3$ is an open map. Therefore, in any neighborhood of $q$ in 
$\wt S^+$ it would exist points $y \in \wt S^+$ such that $\wt N_3(y) <0$, which leads to 
a 
contradiction. 

Thus we have $\wt N_3\not=0$ on $\wt S^+$. We prove in the same way that 
$\wt N_3\not=0$ on $\wt S^-$ too. 

\smallskip 

Assume by contradiction that $\wt S^+$ is not a vertical graph. Then there exist two 
points $q, \ov q \in \wt S^+$ lying to same vertical straight line. As the tangent planes 
of $\wt S^+$ at $q$ and $\ov q$ are not vertical, there exists a real number $\delta>0$ 
such that a neighborhood $V_{\ov q} \subset \wt S^+$ of  $\ov q$ and a 
 neighborhood $V_q \subset \wt S^+$ of  $ q$ are vertical graphs over an 
Euclidean disc 
of radius $\delta$ in the $(u,v)$-plane.

But, by construction, for $k$ large enough a piece $U_{\ov q}$ of $\wt D_k^+$ is 
$C^2$-close of $V_{\ov q}$ and a piece $U_q$ of $\wt D_k^+$ is 
$C^2$-close of $V_q$. Clearly this would imply that the vertical projections of 
$U_{\ov q}$ and $U_q$ on the $(u,v)$-plane have non empty intersection. But this is not 
possible since  $\wt D_k^+$ is a vertical graph. We conclude therefore that 
$\wt S^+$ is a vertical graph.

We can prove in the same way that $\wt S^-$ is a vertical graph.
\end{proof}
\vskip1mm

\noindent {\bf End of the proof of the proposition}

 \smallskip
Let $P\subset \R^3$ be any vertical plane verifying $\wt L\subset P$ and 
$P\not= \wt \Pi$. We deduce from Lemmas \ref{L.Intersection} and \ref{L.Gauss} that 
$(\wt S\cap P)\setminus \wt L$ is a vertical graph.
 Therefore, the structure of the intersection of two minimal surfaces 
tangent at a 
point, see \cite[Theorem 7.3]{C-M} or \cite[Lemma, p. 380]{Serrin}, shows that there 
cannot be two distinct points of $\wt L$ where the 
tangent plane of $\wt S$ is $P$. 

Let $\nu$ and $-\nu$ be the two unit vectors orthogonal 
to $P$. Since $\wt N_3\not=0$ on $\wt S \setminus \wt L$ we deduce that $\nu$ and $-\nu$ 
are 
not both assumed by the Gauss map of $\wt S $. By varying the 
vertical planes $P$, we obtain that the Gauss map of 
$\wt S $ omits infinitely many points (belonging to the equator of the 
2-sphere). Then 
$\wt S $ must be a plane, see \cite[Theorem]{Xavier} or \cite[Theorem I]{Fujimoto}. On 
account of (\ref{origine-S-fibre}) we arrive to a   contradiction. This accomplishes the 
proof of the proposition. \qed

\smallskip 

Observe that, actually, the proof of Proposition \ref{P.courbure-fibre} 
demonstrates the following result.
 
\begin{proposition}
Let $\Omega \subset  \M^2(\kappa)$, $\kappa \leq 0$, be a convex domain. 
 Let $C_1, C_2 \subset \partial \Omega$ 
be two open arcs admitting a common  endpoint $x_0\in \partial \Omega$.

Let $(\Sigma_n)\subset \E(\kappa,\tau) $, $\tau \geq 0$, be a sequence of minimal surfaces 
satisfying for any $n$: 
\begin{itemize}
 \item $\Sigma_n$ is a minimal surface with boundary, whose  interior is the graph of a 
function $u_n$ defined on  $\Omega $,

\item the function $u_n$ extends continuously up to the open arcs $C_1$ and $C_2$,

\item   the restrictions of the map 
 $u_n$ to $C_1$ and $C_2$ have both a finite limit at $x_0$,

\item there exists a fixed open vertical segment 
$\gamma := {x_0}\times (a,b) \subset \E(\kappa,\tau)$ such that 
$\gamma \subset \partial \Sigma_n$ and $\gamma$ is independant of $n$.
\end{itemize}
Then  the Gaussian curvature $K_n$ of the surfaces $S_n$ is uniformly bounded 
in the neighborhood of each point of $\gamma$. 
Namely, for any $p\in \gamma$ there exist  $R_p, K_p >0$, 
 satisfying for any $n$
\begin{equation*}
d_n (p,\partial \Sigma_n \setminus \gamma) > 2R_p\, ; \quad \text{and} \quad
 \lvert K_n (x) \lvert \leq K_p
\end{equation*}
 for any 
$x\in \Sigma_n $ such that $d_n(p,x) < R_p$, where $d_n$ denotes the  intrinsic 
distance   on $\Sigma_n$.
\end{proposition}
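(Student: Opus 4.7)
The plan is to mimic the blow-up argument used for Proposition \ref{P.courbure-fibre} almost verbatim, the only real adjustment being that the limit surface $\wt S$ is now a priori one-sided (the sequence $\Sigma_n$ has no built-in reflection symmetry across $\gamma$). So I argue by contradiction: suppose there exist $p\in \gamma$, a subsequence $n_k$, and points $x_k\in \Sigma_{n_k}$ with $d_{n_k}(p,x_k)\to 0$ and $|K_{n_k}(x_k)|\to\infty$. Using the hypothesis $d_n(p,\partial\Sigma_n\setminus\gamma)>2R_p$, I may choose a small $c>0$ so that $\ov B_{n_k}(p,c)\subset \Sigma_{n_k}$ is disjoint from $\partial\Sigma_{n_k}\setminus\gamma$ for $k$ large. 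I then define the same auxiliary function $f_k(x)=\sqrt{|K_{n_k}(x)|}\,d_{n_k}(x,\partial B_{n_k}(p,c))$, pick a maximum point $p_k$, and set $\lambda_k=\sqrt{|K_{n_k}(p_k)|}\to\infty$.

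Next, I rescale by $\lambda_k$ via the homothety $H_k$ as in the proof of Proposition \ref{P.courbure-fibre}, obtaining embedded orientable minimal discs $\wt D_k\subset\E(\kappa/\lambda_k^2,\tau/\lambda_k)$ with center $0_3$, radius $\wt\rho_k\to\infty$, uniform curvature bound $\sqrt{|\wt K_{n_k}|}<2$, and $|\wt K_{n_k}(0_3)|=1$. The ambient metrics converge $C^2$ to the Euclidean metric on $\R^3$, so by the compactness from \cite{RST} a subsequence of $(\wt D_k)$ converges $C^2$ on compacts to a connected orientable minimal surface $\wt S\subset\R^3$ with $|\wt K(0_3)|=1$. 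Let $\wt L_k:=H_k(L)$; there are two cases depending on whether $(\wt L_k)$ disappears to infinity.

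In the first case ($\wt L_k$ disappears to infinity), fix any compact subset of $\wt S$; for $k$ large the corresponding pieces of $\wt D_k$ are disjoint from $\wt L_k$, hence lie in the rescaling of the vertical graph $\mathrm{int}(\Sigma_{n_k})$, and are thus Killing graphs (stable) in the rescaled ambient. As in the first case of Proposition \ref{P.courbure-fibre}, stability passes to the limit, so $\wt S$ is a complete, orientable, stable minimal surface in $\R^3$, hence a plane by do~Carmo--Peng / Fischer-Colbrie--Schoen / Pogorelov, contradicting $|\wt K(0_3)|=1$. In the second case ($\wt L_k\to\wt L$), convexity of $\Omega$ gives a supporting geodesic $C_{x_0}$ at $x_0$, and the associated vertical plane $\Pi=C_{x_0}\times\R$ (a Killing plane in $\E(\kappa,\tau)$) satisfies $\Sigma_n\subset\ov{\Pi^+}$. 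After rescaling, $\wt D_k\subset H_k(\ov{\Pi^+})$, so the limit surface obeys $\wt S\subset\ov{\wt\Pi^+}$ where $\wt\Pi\supset\wt L$. Lemmas \ref{L.plane} and \ref{L.Intersection} go through word for word (they only used that $\mathrm{int}(S_{n_k})\cap(\Pi\setminus L)=\emptyset$, which holds for $\mathrm{int}(\Sigma_{n_k})$ here) and yield $\wt S\cap\wt\Pi=\wt L$, while Lemma \ref{L.Gauss} gives $\wt N_3\neq 0$ on $\wt S\setminus\wt L$ and that $\wt S\setminus\wt L$ is a vertical graph.

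The main obstacle is the conclusion: in the reflected setting of Proposition \ref{P.courbure-fibre} the surface $\wt S$ was a \emph{complete boundaryless} surface in $\R^3$ and Fujimoto's theorem applied directly, whereas here $\wt S$ has $\wt L$ in its boundary. To close this gap I would double $\wt S$ by Euclidean reflection across the straight line $\wt L$, producing $\wt S^*:=\wt S\cup\wt L\cup I_{\wt L}(\wt S)$; since $\wt S$ is obtained as a $C^2$-limit of embedded minimal surfaces with uniformly bounded curvature up to the smooth boundary arc in $\wt L_k$ and $I_{\wt L}$ is a Euclidean isometry fixing $\wt L$ pointwise, $\wt S^*$ is a smooth complete orientable minimal surface in $\R^3$ (this is the classical linear Schwarz reflection in $\R^3$, which is available since we are in the Euclidean ambient, and does \emph{not} use the result we are proving). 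Then as in the proof of Proposition \ref{P.courbure-fibre}, for every vertical plane $P\supset\wt L$ with $P\neq\wt\Pi$ the set $(\wt S^*\cap P)\setminus\wt L$ is a vertical graph, so by \cite[Theorem 7.3]{C-M} the Gauss map of $\wt S^*$ cannot attain both unit normals to $P$; varying $P$ shows the Gauss map omits infinitely many points on the equator of $\s^2$, hence $\wt S^*$ is a plane by Fujimoto--Xavier, contradicting $|\wt K(0_3)|=1$ and finishing the proof.
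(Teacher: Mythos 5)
Your overall blow-up scheme is the right one, but the order of operations is where the argument breaks down, and it is precisely the point the paper's proof is built to avoid. The paper does not blow up the one-sided surfaces $\Sigma_n$ and then reflect the limit: it first cuts $\Sigma_n$ down to a convex subdomain $\wt\Omega$ with a corner at $x_0$, identifies $\Sigma_n\cap\ov{\wt\Omega\times\R}$ with the unique area-minimizing Plateau solution for its boundary curve via Proposition~\ref{P.embedded}, reflects that area-minimizing disc across $\gamma$ using \cite[Proposition 3.4]{SE-T9}, and only then runs the blow-up of Proposition~\ref{P.courbure-fibre} on the doubled surfaces, for which $p$ is an interior point and only the interior compactness of \cite{RST} is ever needed. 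Since the hypotheses here do not assume $\Sigma_n$ is area minimizing, this reduction (convexity of $\wt\Omega$ together with the uniqueness clause of Proposition~\ref{P.embedded}) is what makes reflecting the approximating surfaces legitimate; your proposal never uses it.

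The concrete gap in your route is the final reflection of the limit. Because you do not reflect before rescaling, $\ov B_{n_k}(p,c)$ is a half-disc with part of its boundary on $\gamma$, and you must decide whether $d_{n_k}(\cdot,\partial B_{n_k}(p,c))$ in $f_k$ measures distance to the full topological boundary or only to its outer part. With the full boundary, the initial estimate $f_k(x_k)\geq kc/2$ fails, since the bad points $x_k$ accumulate on $\gamma$ where $f_k$ then vanishes; with the outer part only, the maximum point $p_k$ may sit on $\gamma$ and the rescaled discs $\wt D_k$ are genuinely half-discs with boundary on $\wt L_k$. In that situation the results you invoke from \cite[Proposition 2.3, Lemma 2.4]{RST} are interior statements: they give uniform graph representations only at points at definite intrinsic distance from $\partial\wt D_k$, so they provide neither $C^2$ convergence of $\wt D_k$ up to $\wt L_k$ nor any regularity of $\wt S$ up to $\wt L$. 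Your doubling $\wt S^{*}=\wt S\cup\wt L\cup I_{\wt L}(\wt S)$ therefore rests on an unproved assertion: the classical Schwarz reflection across a straight line in $\R^3$ is available only for surfaces that are at least $C^1$ up to the boundary line, and establishing that regularity for $\wt S$ --- equivalently, uniform boundary estimates for the $\wt D_k$ along $\wt L_k$ --- is exactly the difficulty at stake (recall the introduction's remark that no general reflection principle is known for minimal surfaces in $\R^3$ merely containing a line in their boundary). The rest of your outline (both cases of the dichotomy, the adaptations of Lemmas~\ref{L.plane}, \ref{L.Intersection} and \ref{L.Gauss}, the Gauss-map count) would go through once that regularity is secured, but as written the proof has a hole at its central step.
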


\vskip1mm

\noindent {\em Outline of the proof.} 
 We first choose points $p_i \in C_i$, $i=1,2$, and we denote by 
  $\wt C_i $ the open arc of $C_i$  with endpoints $p_i$ and $x_0$, $i=1,2$.
 Then we choose  an  open arc 
 $\wt C_3 \subset \Omega$  with endpoints $p_1$ and $p_2$ such that the bounded domain 
 $\wt \Omega \subset \Omega$ with boundary 
 $\wt C_1 \cup \wt C_2 \cup \wt C_3 \cup \{ x_0, p_1, p_2\}$, 
 is convex.
 
 Let $\wt \Gamma_n \subset \E(\kappa,\tau)$ be the Jordan curve constituted with the graph of 
 $u_n$ over $\wt C_i$, $i=1,2,3$, the points $(p_j, u_n (p_j))$, $j=1,2$, and a vertical 
closed segment 
 $\wt \gamma_n$ above $x_0$. Thus $\gamma\subset \wt\gamma_n$ for any $n$.
 
 Since $\wt \Omega$ is convex  we deduce from 
Proposition \ref{P.embedded} in the Appendix,
 that  
 $\wt\Sigma_n := \Sigma_n \cap \ov{\wt\Omega\times \R}$ is an area minimizing disc, 
 solution of the 
 Plateau problem for the Jordan curve $\wt \Gamma_n$. Thus we can  apply the reflection 
principle around  
$\gamma$ to $\wt\Sigma_n$, see \cite[Proposition 3.4]{SE-T9}. Then we proceed as in the proof of 
Proposition \ref{P.courbure-fibre}.

\vskip1mm

Observe that the same result holds also for $\sd \times \R$.

\bigskip 

\section{Appendix}

Let $\Omega\subset \M(\kappa)$, $\kappa \leq 0$, be a bounded convex domain 
 bounded by a  $C^0$ Jordan curve $\Gamma:= \partial \Omega $, 
and let $f : \Gamma \longrightarrow \R$ be a piecewise 
continuous function, allowing a finite number of discontinuities. 

We denote by $\wt \Gamma \subset \E (\kappa, \tau) $, $\tau \geq 0$, the graph of $f$. 
Namely, if $f$ is continuous then $\wt \Gamma$ is a Jordan curve with a one-to-one projection on 
$\Gamma$.
If $f$ 
has discontinuity points then $\wt \Gamma$ is constituted of a finite number of simple arcs 
admitting a one-to-one vertical projection  on some subarc of $\Gamma$, and a vertical segment over 
each point of $\Gamma$ where $f$ is not continuous.

 We consider also a $C^0$ bounded convex domain $\Omega$ in $\sd$. In this case 
$\wt \Gamma\subset \sd \times \R$.

\begin{proposition}\label{P.embedded}
 There exists an embedded area minimizing disc $\Sigma$ in 
 $\ov \Omega \times \R \subset\E (\kappa, \tau)$,  $\kappa \leq 0$, $\tau \geq 0$, 
 $($or in $\sd \times \R)$ with boundary $\wt \Gamma$. 
 Furthermore
\begin{itemize}
 \item $ {\rm int}(\Sigma)= \Sigma \setminus \wt \Gamma$ is a vertical graph over $\Omega$,

 \item  If $\Sigma_0\subset\E (\kappa, \tau)$  $($or in $\sd \times \R)$ is any 
minimal surface bounded by $\wt \Gamma$ 
such that 
 $\Sigma_0\setminus \wt \Gamma$ is a vertical graph over $\Omega$, then $\Sigma_0=\Sigma$.
 
\end{itemize}
 %  Moreover, 
%  $ {\rm int}(\Sigma)= \Sigma \setminus \wt \Gamma$ is a vertical graph over $\Omega$.
% 
% Furthermore, $\Sigma$ is the unique embedded minimal disc bounded by $\wt \Gamma$.
\end{proposition}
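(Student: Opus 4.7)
The plan is to solve the Plateau problem for $\wt\Gamma$ inside the mean-convex slab $\overline\Omega\times\R$, then to verify by a Rado-type vertical sliding that the resulting embedded area-minimizing disc is a vertical graph, and finally to derive uniqueness from the maximum principle for the minimal surface equation.

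First I would produce $\Sigma$ by invoking the classical Morrey existence theorem for the Plateau problem in homogeneously regular Riemannian three-manifolds, combined with the Meeks--Yau embeddedness theorem in mean-convex domains. The required barrier at each point $p\in\Gamma$ is the vertical cylinder $\Gamma_p\times\R$ above the supporting geodesic $\Gamma_p\subset\M(\kappa)$ furnished by convexity of $\Omega$; in $\E(\kappa,\tau)$ and in $\sd\times\R$ this cylinder is minimal, being invariant under the vertical Killing field $\partial_t$ and projecting onto a geodesic of the base, and such cylinders certify the mean-convexity of $\overline\Omega\times\R$. The resulting embedded area-minimizing disc $\Sigma$ is therefore contained in $\overline\Omega\times\R$.

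Next I would show that $\mathring\Sigma := \Sigma\setminus\wt\Gamma$ is a vertical graph over $\Omega$ via a Killing sliding argument. Let $\Sigma^s := \Sigma + s\,\partial_t$ for $s\in\R$; for $s$ sufficiently large, $\Sigma^s$ lies strictly above $\Sigma$. Decreasing $s$, let $s_0\geq 0$ be the first value at which $\Sigma^s$ meets $\mathring\Sigma$ or an interior point of a non-vertical arc of $\wt\Gamma$. If $s_0>0$, the strong maximum principle (respectively the Hopf boundary lemma at a non-vertical boundary point) forces $\Sigma=\Sigma^{s_0}$, which contradicts the fact that the non-vertical arcs of $\wt\Gamma$ are graphs over $\Gamma$ and are strictly shifted upward by $s_0\,\partial_t$. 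Hence $s_0=0$, so distinct points of $\mathring\Sigma$ never lie on the same vertical line, which is the graph property.

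The main obstacle is uniqueness, together with accommodating the vertical segments of $\wt\Gamma$ above the discontinuities of $f$ throughout the sliding step. For uniqueness, given any minimal surface $\Sigma_0$ bounded by $\wt\Gamma$ with $\Sigma_0\setminus\wt\Gamma$ the graph of a function $u_0$ on $\Omega$, write $\mathring\Sigma$ as the graph of $u$; both functions satisfy the minimal surface equation on $\Omega$. Applying the same vertical sliding to $\Sigma_0$ against $\Sigma$ yields $u\geq u_0$ and, by symmetry, $u\leq u_0$, so $u\equiv u_0$ and $\Sigma_0=\Sigma$. The delicate point common to both arguments is that the boundaries of $\Sigma$ and $\Sigma^{s_0}$ share portions of the same vertical line above each discontinuity of $f$; one must conduct the sliding only over the open subset of $\Gamma$ where $f$ is continuous and invoke the Hopf boundary lemma at the non-vertical boundary arcs, the vertical segments being attained only as interior limits of the graphs.
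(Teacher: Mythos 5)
Your overall strategy (Plateau solution in the slab, Rado-type sliding for the graph property, maximum principle for uniqueness) differs from the paper's route and has gaps precisely at the points the paper's approximation scheme is designed to avoid. First, you invoke the Meeks--Yau embeddedness theorem for the mean-convex region $\ov\Omega\times\R$; the paper explicitly remarks that this theorem cannot be used here because $\Gamma$ is only a $C^0$ Jordan curve (piecewise geodesic, with corners), so $\partial(\ov\Omega\times\R)$ lacks the regularity Meeks--Yau requires. Morrey alone gives an immersed area-minimizing disc; embeddedness must be deduced from the graph property, not the other way around. Second, and more seriously, your vertical sliding argument breaks down exactly where $\wt\Gamma$ contains vertical segments (over the discontinuities of $f$): such a segment is carried into itself, up to a shift, by the translation $s\,\partial_t$, so the first contact between $\Sigma^{s_0}$ and $\Sigma$ can occur on a vertical boundary segment, where both surfaces have boundary and neither the interior strong maximum principle nor the Hopf boundary lemma applies. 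You flag this as ``the delicate point'' but do not resolve it, and it is the entire content of the proposition: when $f$ is continuous the graph property is just Rado's theorem. The paper circumvents the difficulty by approximating $f$ with continuous functions $f_n$, solving the Plateau problem for each Jordan curve $\wt\Gamma_n$ (where Rado applies cleanly), passing to a limit graph, recovering the boundary values via Scherk-type barriers over isosceles geodesic triangles, and then proving that the limit is area minimizing by an explicit comparison: any competitor $S$ for $\wt\Gamma$ is converted into a competitor for $\wt\Gamma_n$ by gluing thin pieces $s_n(x_i)$ of the cylinder $\Gamma\times\R$ whose area tends to $0$.

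The uniqueness step has the same flaw: $u$ and $u_0$ agree on $\Gamma$ only off the finite set of discontinuities of $f$, so the inequality $u\geq u_0$ cannot be obtained by naive sliding or by the standard boundary maximum principle; one needs a maximum principle tolerating a finite exceptional boundary set (a Jenkins--Serrin/Nitsche-type flux argument). The paper cites Pinheiro's general maximum principle, adapted to $\E(\kappa,\tau)$, or Younes's argument, for exactly this. As written, your ``by symmetry'' step assumes the conclusion of that nontrivial result.
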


\begin{remark}
 For the existence  of an embedded minimal disc in $\ov \Omega \times \R$ bounded by 
 $\wt \Gamma$, we cannot use the well-known result of Meeks-Yau \cite[Theorem 1]{Meeks-Yau}, since 
the boundary of  $\ov \Omega \times \R$ does not have the required regularity.
\end{remark}

\begin{proof}
 We perform the proof in $\E(\kappa,\tau)$, the proof in $\sd \times \R$ is 
analougous.

Assume first that  $f$ has no discontinuity points. Since $\E (\kappa, \tau) $ is 
homogeneous, we deduce from a result of Morrey \cite{Morrey} that  there exists a minimizing area 
disc $\Sigma$ bounded by $\wt \Gamma$. Since $\Omega$ is  a convex domain, we deduce from the 
maximum 
principle  that ${\rm int}(\Sigma)\subset \Omega \times \R$ (we use also the fact that if  
$\gamma\subset \M(\kappa)$ is a geodesic line, then $\gamma \times \R$ is a minimal 
surface of 
$\E (\kappa, \tau)$). 

Furthermore as  $\wt \Gamma$ has a 
one-to-one vertical projection on $\Gamma$, we infer that ${\rm int}(\Sigma)$ is a vertical graph 
over $\Omega$, as in Rado's theorem \cite[Theorem 16]{Lawson}.

Thus $\Sigma$ is an embedded area minimizing disc bounded by $\wt \Gamma$ and is a 
vertical graph over $\ov \Omega$.
 Clearly, $\Sigma$ is the unique minimal graph bounded by $\wt \Gamma$. The same 
affirmation holds in $\sd \times \R$.

\smallskip

Assume now that $f$ has a finite number of discontinuity points, let 
$x_0\in \Gamma$ be such a point.  Giving an orientation to $\Gamma$, we have 
therefore
\begin{equation*}
 \lim_{\substack{x \to x_0\\ x<x_0}} f(x) \not= \lim_{\substack{x \to x_0\\ x > x_0}} f(x).
\end{equation*}
Let $(p_n)$ be a sequence on $\Gamma$ such that 
\begin{itemize}
 \item $p_n \to x_0$ and $p_n > x_0$,
 \item for any $n$, the function $f$ is continuous on the arc 
 $(x_0, p_n]$ of $\Gamma$.
\end{itemize}
Now we modify $f$ on the closed arc $[x_0, p_n]$ in such a way that the new function $f_n$ is 
strictly monotonous on $[x_0, p_n]$ and satisfies
 
\begin{equation*}
 f_n(x_0)=  \lim_{\substack{x \to x_0\\ x<x_0}} f(x) , \qquad 
 f_n(p_n) = f(p_n)\qquad \text{and} \qquad f_n(x)\not= f(x)
\end{equation*}
for any $x\in (x_0,p_n)$.

We assume that we have modified in the same way $f$ in a neighborhood of each point of 
discontinuity and we continue denoting $f_n$ the new function. Clearly, we have 
\begin{itemize}
 \item for each point $x\in \Gamma$ where $f$ is continuous: $f_n(x) \to f(x)$,
 \item the function $f_n$ is continuous on $\Gamma$.
\end{itemize}
We denote by $\wt \Gamma_n$ the graph of $f_n$.

We know from the beginning of the proof, that for any $n$ there exists an (unique) embedded area 
minimizing 
disc $\Sigma_n$ bounded by the Jordan curve $\wt \Gamma_n$, which is a vertical graph over 
$\ov \Omega$. Let $ u_n : \ov \Omega \longrightarrow \R$ be the function whose the graph is 
$\Sigma_n$. 

Since the sequence $(f_n)$ is uniformly bounded on $\Gamma$, we deduce from the maximum principle 
that the sequence $(u_n)$ is uniformly bounded on $\ov \Omega$ too. 
In \cite[Section 4.1]{N-E-T} it is stated a compactness principle in the case where the ambient 
space is Heisenberg space, but it can be stated and proved in the same way in $\E(\kappa,\tau)$, 
 $\kappa \leq 0$, $\tau \geq 0$, and also in $\sd \times \R$. 

Therefore, up to considering a subsequence, 
we can assume that the sequence of restricted maps
$(u_{n|\Omega})$ converges on $\Omega$, for the topology $C^2$ and uniformly on any compact subset 
of $\Omega$, to a function 
$u : \Omega \longrightarrow \R$ satisfying the minimal surface equation. Thus the graph of $u$ is a 
minimal surface $\mathring \Sigma$ which has a one-to-one projection on $\Omega$.

\smallskip

Now we recall a result in  $\E(\kappa,\tau)$ for $\kappa < 0$ and $\tau >0$  proved in 
\cite[Lemma 5.3]{Younes}, 
but which holds more generally for $\kappa \leq 0$ and $\tau \geq 0$, and also in $\sd \times \R$.

Let $T\subset \M(\kappa)$ be an isosceles geodesic triangle and let 
$U\subset \M(\kappa)$ be the bounded convex domain with boundary $T$. We denote by $A,B,C$ the open 
sides of $T$ and by $a,b,c$ the vertices of $T$. Assume that $A$ and $B$ have same length 
and that $c$ is the common endpoint of $A$ and $B$.

Then, for any real number $\alpha$ there exists a continuous function 
$v : U\cup (T\setminus\{a,b\})  \longrightarrow [0, \alpha] \subset \R$ such that
\begin{enumerate}
 \item $v$ is $C^2$ on $U$ and satisfies the minimal surface equation,
 
 \item $v=0$ on $A\cup B\cup \{c\}$, and $v=\alpha$ on $C$.
\end{enumerate}

\smallskip

Using those surfaces as barrier at each point  $x\in \Gamma$ where $f$ is continuous, as in the 
proof of \cite[Theorem 3.4]{ST1}, we infer that $u$ extends continuously up to $x$ setting 
$u(x)=f(x)$. We deduce that the topological boundary of $\mathring \Sigma$ is the Jordan curve $\wt 
\Gamma$.

Setting $\Sigma:= \mathring \Sigma \cup \wt \Gamma$, we have 
\begin{itemize}
 \item $\Sigma$ is an embedded minimal disc in $\ov \Omega \times \R$,
 
 \item $\partial \Sigma = \wt \Gamma$,
 
 \item ${\rm int}(\Sigma)= \mathring \Sigma \subset \Omega \times \R$, and 
 ${\rm int}(\Sigma)$ is a vertical graph over $\Omega$.
\end{itemize}

Now we want to prove that $\Sigma$ is an area minimizing disc.

\smallskip

Observe first that $(Area (\Sigma_n))$ is a bounded sequence. Thus, up to considering a 
subsequence, we 
can assume that $(Area (\Sigma_n))$ is a convergent sequence. Recall that the sequence $(u_n)$ 
converges for the $C^2$ topology to $u$, uniformly on any compact subset of $\Omega$. Therefore for 
any compact subset $K\subset \Omega$ we have 
\begin{equation*}
 Area (graph (u_{|K}))= \lim Area (graph (u_{n|K}))
 \leq \lim (Area (\Sigma_n)).
\end{equation*}
 Thus we get 
\begin{equation*}
 Area (\Sigma) \leq \lim (Area (\Sigma_n)).
\end{equation*}

Suppose by contradiction that $\Sigma$ is not a minimizing area disc.
Thus, considering a solution of the Plateau problem for 
$\wt \Gamma$, 
there exists a minimal immersed disc $S\subset \E(\kappa,\tau)$ 
  (or in $\sd \times \R$), with 
\begin{itemize}
 \item $\partial S =\wt \Gamma$,
 
\item $Area (S) < Area (\Sigma)$. 
\end{itemize}
Using the maximum principle we get
\begin{itemize}
 \item $S \subset \ov \Omega \times \R$,
 
 \item $S\setminus \wt \Gamma \subset \Omega \times \R$.
\end{itemize}

Let again $x_0\in \Gamma$ be a point where $f$ is not continuous, we use the same notations as in 
the beginning of the proof. 
For  any $n$ we denote by $s_n$ the bounded subset of the cylinder 
$\Gamma \times \R$ bounded by the following curves:
\begin{itemize}
 \item the graph of $f_n$ over the arc $[x_0, p_n]$ of $\Gamma$, 
 
 \item the graph of $f$ over the arc $(x_0, p_n]$,
 
 \item the vertical closed segment above $x_0$, with endpoints \newline
 $(x_0,\lim_{\substack{x \to x_0\\ x<x_0}} f(x))$ and 
  $(x_0,\lim_{\substack{x \to x_0\\ x>x_0}} f(x))$. 
\end{itemize}

Clearly we have $Area (s_n) \to 0$. Furthermore, by  the foregoing discussion there 
exists $\varepsilon >0$ 
such that $Area (S) < Area (\Sigma_n) -\varepsilon$ for any $n$ large enough.

 Let $\{x_i\}\subset \Gamma$ be the finite set of points where $f$ is not 
continuous. For any point $x_i$ we denote by $s_n (x_i)$ the piece of $\Gamma \times \R$ 
constructed as above, 
corresponding to $x_i$. 

Observe now that $S_n := S \cup \left( \cup_i s_n (x_i)\right) $ is a disk with boundary the Jordan 
curve $\wt \Gamma_n$. By construction we have $Area (S_n) < Area (\Sigma_n)$ for any $n$ large 
enough, 
contradicting the fact that $\Sigma_n$ is a minimizing area disc with boundary $\wt \Gamma_n$.

\smallskip

Thus $\Sigma$ is an embedded minimizing area disc with boundary 
$\wt \Gamma$ such that $\Sigma\setminus \wt \Gamma$ is a vertical graph over $\Omega$.

\smallskip

It remains to prove that 
if $\Sigma_0\subset\E (\kappa, \tau)$ (or in $\sd \times \R$), is a 
minimal surface  bounded by 
$\wt \Gamma$ such that $\Sigma_0\setminus \wt \Gamma$  is a vertical graph over $\Omega$, then 
$\Sigma_0=\Sigma$.

Let $u_0: \Omega\longrightarrow \R$ be the function whose the graph is 
$\Sigma_0\setminus \wt \Gamma$. In \cite[Theorem 1.3]{Pinheiro} A.L. Pinheiro 
proves a general maximum principle in a Riemannian product $\M \times \R$, where 
$\M$ is Riemannian surface. The proof can be adapted to $\E(\kappa, \tau)$, $\kappa \leq 0$, 
$\tau \geq 0$, thus $u_0=u$, that is $\Sigma_0=\Sigma$.

Observe also that in $\PSL (2,\R)$, R. Younes proved directly that $u_0=u$, see  
\cite[Theorem 1.1]{Younes}. The proof can be adapted in $\E(\kappa, \tau)$, $\kappa \leq 0$, 
$\tau \geq 0$, and also in $\sd \times \R$.

Thus $\Sigma_0=\Sigma$, which concludes the proof.
\end{proof}

\bigskip 

\bigskip

\begin{proposition}\label{P.Line}
 Let $M\subset \R^3$ be a complete minimal surface containing a segment 
 of a straight line $D$. Then the whole line $D$ belongs to $M$: $D\subset M$.
\end{proposition}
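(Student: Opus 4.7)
The plan is to combine two classical facts: (i) minimal surfaces in $\R^3$ are real analytic, being solutions of the quasilinear elliptic minimal surface equation with analytic coefficients; and (ii) any straight line lying on a surface $M\subset \R^3$ is automatically a geodesic of $M$, since its Euclidean second derivative vanishes, so both its tangential part (geodesic curvature) and its normal part (normal curvature) are zero. Given these, the statement will follow from the identity principle for real analytic functions combined with the completeness of $M$.

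More precisely, pick a nondegenerate segment $I\subset D\cap M$ and parametrize it by arc length. Then $I$ is a unit speed geodesic arc of $M$, and since $M$ is complete this arc extends to a globally defined unit speed geodesic $\gamma:\R \longrightarrow M$. (If $M$ is merely immersed rather than embedded, one first lifts $I$ to the abstract complete manifold and performs the extension there, letting $\gamma$ denote the composition with the immersion into $\R^3$.) Because the induced Riemannian metric, its Christoffel symbols, and the scalar second fundamental form $II$ of $M$ are real analytic, the curve $\gamma$ is real analytic, and so is the scalar function
\begin{equation*}
 f(t) := II_{\gamma(t)}\bigl(\gamma'(t),\gamma'(t)\bigr).
\end{equation*}

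On the parameter interval that corresponds to $I$, the curve $\gamma$ traces a piece of the straight line $D$, so its acceleration in $\R^3$ is identically zero; in particular $f$ vanishes on a nondegenerate interval. By the identity principle for real analytic functions, $f\equiv 0$ on $\R$. Since $\gamma$ is a geodesic of $M$, the tangential plus normal decomposition of its Euclidean acceleration reads $\gamma''(t)=f(t)\,\nu(\gamma(t))$ for a unit normal field $\nu$ along $\gamma$; therefore $\gamma''(t)=0$ for every $t\in \R$. Hence $\gamma$ is an affine line of $\R^3$, and as it coincides with $D$ on the subinterval parametrizing $I$, we conclude $\gamma(\R)=D$, which gives $D\subset M$.

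The main obstacle, and the reason completeness is indispensable, is to guarantee that the geodesic $\gamma$ actually exists for all $t\in \R$: without completeness the extension could terminate before covering the whole of $D$. The minor technical point that $M$ could be immersed rather than embedded is handled by performing the geodesic continuation in the abstract complete $2$-manifold and then projecting to $\R^3$. Once these points are settled, the core of the proof is simply the real analyticity of minimal surfaces together with the analytic identity principle applied to the scalar normal curvature of $\gamma$.
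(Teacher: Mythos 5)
Your argument is correct, and it reaches the conclusion by a genuinely different route than the paper. The paper works extrinsically: it sets $B=\sup\{t:\ [a,t]\times\{0\}\times\{0\}\subset M\}$, uses completeness only to ensure $(B,0,0)\in M$, and then intersects $M$ with the plane $P$ spanned by $D$ and the normal to $M$ at $(B,0,0)$; transversality makes $M\cap P$ a local analytic arc, written as a graph of an analytic $f$ over an $x$-interval, and $f\equiv 0$ on $[B-\varepsilon,B]$ forces $f\equiv 0$, extending the segment past $B$ and contradicting the definition of $B$. You instead work intrinsically: the segment is a geodesic (zero Euclidean acceleration implies zero covariant acceleration), completeness extends it to a geodesic $\gamma:\R\to M$, and the identity principle applied to the analytic normal curvature $f(t)=II(\gamma',\gamma')$ along $\gamma$ gives $\gamma''\equiv 0$ in $\R^3$, so $\gamma$ parametrizes all of $D$. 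Your version uses completeness more substantively (via Hopf--Rinow geodesic completeness) and requires the standard facts that an analytic metric has analytic geodesics and that $II$ is analytic along $\gamma$ (the unit normal along a curve always exists and is analytic, so orientability is not an issue); in exchange it avoids the paper's sup/inf bookkeeping and the discussion of the local structure of $M\cap P$. The paper's route is marginally more elementary, needing only the analyticity of the transverse plane section of $M$; both proofs ultimately rest on real analyticity of minimal surfaces plus the identity principle, so the two are of comparable depth and either is acceptable.
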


\begin{proof}
We denote by $x, y, z$  the coordinates on $\R^3$. Up to an isometry of $\R^3$ we can 
assume that $D$ is the $x$-axis: 
$D=\{(x,0,0),\ x \in \R \}$. 

 By assumption there exist real numbers $a<b$ such that 
$(x,0,0)\in M$ for any $x\in [a,b]$.
 
We set 
\begin{align*}
 B&:=\sup \{t >a,\ (x,0,0)\in M \ \text{for any } x\in [a,t]\},\\
 A&:=\inf \{t<b,\ (x,0,0)\in M \ \text{for any } x\in  [t,b]   \}.
\end{align*}
We are going to prove that $A=-\infty$ and $B=+\infty$ to conclude that $D\subset M$.

\smallskip 

We have $B\geq b$. 
Assume by contradiction that $B\not=+\infty$. Since $M$ is a complete surface we have  
$(B,0,0) \in M$. Let 
$P\subset \R^3$ 
be the plane containing $D$ and the orthogonal direction of $M$ at $(B,0,0)$. 

Since the surfaces $M$ and $P$ are transverse at $(B,0,0)$, their intersection in a 
neighborhood of $(B,0,0)$ is an analytic arc $\gamma$. Furthermore, up to choosing a 
smaller arc, we can assume that $\gamma$ is the graph,  in $P$, of an analytic 
function $f$ over 
the interval $[B-\varepsilon, B+\varepsilon]$ for $\varepsilon>0$ small enough. 
Since $f$ is an analytic function satisfying $f(x)=0$ for any $x\in [B-\varepsilon, B]$, 
we deduce that $f(x)=0$ for any $x\in [B-\varepsilon, B+\varepsilon]$. Therefore we have 
$(x,0,0) \in M$ for any $x\in [a,B+\varepsilon]$, contradicting the definition of $B$.

Thus we have $B=+\infty$. We prove in the same way that $A=-\infty$, concluding the proof.
\end{proof}

\end{document}